\date{February 23, 2023}
\newtheorem{theorem}{Theorem}[section]
\newtheorem{lemma}[theorem]{Lemma}
\theoremstyle{definition}
\newtheorem{definition}{Definition}[section]
\newtheorem*{acknowledgment}{Acknowledgment}
\newtheorem{proposition}{Proposition}[section]
\newtheorem{statement}{Statement}
\theoremstyle{remark}
\newtheorem*{remark}{Remark}
\title{A colored Tverberg type theorem for unavoidable complexes}
\author{Mikhail Bludov}
\begin{document}
\sloppy
\maketitle

\begin{abstract}
The main result of this paper is a ``colored Tverberg theorem for rainbow-unavoidable complexes''. This theorem may be considered as a merging of two theorems: ``Tverberg theorem for collectively unavoidable complexes'' and ``balanced colored Tverberg theorem''. The main tool for the proof is discrete Morse theory.
\end{abstract}

\section{Introduction}

Tverberg's theorem \cite{T} is a statement about certain partitions of finite set of points in $\mathbb{R}^d$. There are many ways to extend this theorem using different concepts and approaches. Such family of extensions is known as ``Tverberg type theorems'' and appears to be one of the main research theme in topological combinatorics. Among the most important results of the last decade in this field we distinguish the counterexamples to the general ``topological Tverberg theorem'' \cite{CTTC, ETP, BPSCTT}.

Among other results, ``Balanced Van Kampen-Flores''  \cite[Theorem 1.2]{SCCTT} was acquired for balanced complexes  (definition \ref{balanced}). Then in  \cite{TverType} this result was extended to ``Tverberg theorem for collectively unavoidable complexes'' (Theorem \ref{TCUC}). The approach to this theorem relies on methods of discrete Morse theory and on the concepts of ``balanced'' and ``collectively unavoidable'' $r$-tuples of simplicial complexes (definition \ref{ColUnComp}). In this paper using the concept of $(r,s)$-unavoidability we acquire a slight extension of Theorem \ref{TCUC}.

``Colored Tverberg type theorems'' \cite{ZivCol, BaranyLarman} is another research direction. In \cite{ColTver}, among other things, was obtained the ``balanced colored Tverberg theorem'' \ref{BCT}. This theorem requires the concept of ``balanced'' cimplicial complex and discrete Morse theory and is relative to Theorem \ref{TCUC}. The main result of this paper (Theorem \ref{CTCRUC}) is an extension of Theorem \ref{BCT} with the concept of ``collectively unavoidable'' $(r,s)$-tuples. We borrow the idea for the proof from the initial theorem \ref{BCT} but we add some important details.

\subsection{Preliminaries}

 \begin{definition}
Assume that $[m]$ is the set $\{1,\dots.m\}$ and $2^{[m]}$ is the set of all subsets of $[m]$. Let $\Delta_{[m]}$ be a combinatorial simplex on $m$ vertices of dimension $N=m-1$. We identify a simplex $\Delta^{N}$ with $\Delta_{[m]}$ and with $2^{[m]}$.
 
\end{definition}

 \begin{definition}[Definition 1.5.1 in \cite{UBUT}]
An abstract simplicial complex is a pair $(V,K)$, where $V$ is a set and $ K \subseteq 2^V$ is a hereditary system of subsets of $V$; that is, we require that $F \in K$ and $G \subseteq F$ imply $G \in K$ (in particular, $\emptyset \in K$ whenever $K \neq \emptyset$). The sets in $K$ are called (abstract) simplices. Further, we define the dimension ${dim(K)\colon= \max\{|F|-1 : F \in K\}}$.
 \end{definition}

 \begin{definition}[Definition 4.2.1 in \cite{UBUT}]
Let $K$ and $L$ be simplicial complexes. The join $K * L$ is the simplicial complex with vertex set $V(K) \uplus V(L)$ and with the set of simplices $\{F\uplus G: F \in K, G \in L\}$. In words, to construct the join, we first take a disjoint union of the vertex sets, and then we combine every simplex of K with every simplex of L.
 \end{definition}

 \begin{definition}[Section 6 in \cite{UBUT}]
 Let $\mathcal{K} = \langle K_{1}, \dots, K_{r} \rangle$  be a family of simplicial complexes, $K_i \subset 2^{[m]}$. Then the \textit{deleted join} of this family is given by $$(\mathcal{K}^{*}_{\Delta})=K_{1}*_{\Delta} \dots *_{\Delta} K_{r}=\{\uplus_{i=1}^{r} A_{i}: A_i \in K_j, ~ \forall i,j ~ A_i \cap A_j = \emptyset \} \subset (2^{[m]})^{*r}.$$
 \end{definition}

  \begin{definition}[\cite{TverType}]

 The \textit{symmetrized deleted join} of $\mathcal{K}$ is defined as  $$SymmDelJoin(\mathcal{K})=\bigcup_{\pi \in S_{r}}K_{\pi_{1}}*_{\Delta} \dots *_{\Delta} K_{\pi_{r}} \subseteq (2^{[m]})^{*r},$$ where $S_r$ is the symmetric group. An element $A_{1} \sqcup \dots \sqcup A_{r} \in SymmDelJoin(\mathcal{K})$ will be written as $(A_{1}, \dots, A_{r}, B)$, where $B$ is $[m] \setminus \bigcup_{i=1}^{r} A_{i}$. In other words, $(A_{1}, \dots, A_{r}, B)$ is a partition of $[m]$.
 \end{definition}

 \begin{definition} 
A \textit{coloring} of vertices $[m]$ by $k+1$ color is a partition $[m]=C_{1} \uplus \dots \uplus C_{k+1}$ into “monochromatic” subsets $C_i$. A vertex subset $V$ is called a rainbow set or a rainbow simplex if and only if $|V \cap C_{i}| \leq 1$ for each $i=1, \dots, k+1$. Simplicial complex $K$ is called rainbow if all its simplices are rainbow. $Col\Delta_{[m]}$ is the simplicial complex of all rainbow simplices from $\Delta_{[m]}$.
  \end{definition}

 \begin{definition} 
Let $K\subseteq \Delta_{[m]}$ be a simplicial complex. The \textit{$d$-skeleton $K^{(d)}$} of $K$ is a subcomplex of all simplices from $K$ of dimension at most $d$. 
 \end{definition}

\begin{definition} \label{balanced} We say that a simplicial complex $K \subset 2^{m}$ is  \textit{$(m,k)$-balanced} if $$\Delta_{[m]}^{(k-1)} \subseteq K \subseteq \Delta_{[m]}^{(k)}.$$ Suppose $[m]$ is colored, then $K$ is called  \textit{$(m,k)$-rainbow balanced} if $$Col\Delta_{[m]}^{(k-1)} \subseteq K \subseteq Col\Delta_{[m]}^{(k)}.$$
 \end{definition}

 \subsection{Collectively unavoidable complexes}

Collectively unavoidable complexes as self-interest object appeared in \cite{CUC}. In \cite{TUC} the authors study topological properties of $r$-unavoidable complexes. Here we will introduce the definition. 

\begin{definition}
A simplicial complex $K \subseteq 2^{[m]}$ is called collectively $r$-unavoidable if for each ordered collection $(A_1, \dots, A_r)$ of disjoint sets in $[m]$ there exists $i$ such that $A_i \in K$
\end{definition}

In \cite{TUC} there is a natural extension: 
\begin{definition}
A simplicial complex $K \subseteq 2^{[m]}$ is called collectively $(r,s)$-unavoidable if for each ordered collection $(A_1, \dots, A_r)$ of disjoint sets in $[m]$ there exists a set of indices $\{i_1, \dots, i_s\}$ such that $A_{i_j} \in K$ for all $j=1, \dots, s$.
\end{definition}

We are interested in collectively $r$-unavoidable $r$-tuples of complexes. They were originally studied in \cite{ABC}. Here is the definition:
\begin{definition}
\label{ColUnComp}
An ordered $r$-tuple $K = \langle K_1, \dots, K_{r}\rangle$ of subcomplexes of $2^{[m]}$ is collectively $r$-unavoidable if for each ordered collection $(A_1, \dots, A_r)$ of disjoint sets in $[m]$ there exists $i$ such that $A_i \in K_i$.
\end{definition}

\begin{definition} 
An ordered $r$-tuple $K = \langle K_1, \dots, K_{r}\rangle$ of subcomplexes of $2^{[m]}$ is collectively $(r,s)$-unavoidable if for each ordered collection $(A_1, \dots, A_r)$ of disjoint sets in $[m]$ there exists a set of indices $\{i_1, \dots, i_s\}$ such that $A_{i_j} \in K_{i_j}$ for all $j=1, \dots, s$.
\end{definition}





\subsection{Brief review of Tverberg type theorems}

Radon's theorem (or Radon's lemma) is one of the most famous result in discrete geometry. It claims that for a set of $d+2$ points in $\mathbb{R}^d$ there is a partition into two subsets such that the corresponding convex hulls have a nonempty intersection. 

A generalization of this theorem was given by Helge Tverberg in \cite{T}.

\begin{theorem}[Tverberg's theorem]
  Suppose $m\geq (r-1)(d+1)+1$. Then any set $S$ of $m$ points in $\mathbb{R}^d$  can be partitioned into $r$ nonempty, pairwise disjoint subsets  $S_1, \dots, S_r$ in such a way that $\cap_{i=1}^{r}\text{conv}(S_i)\neq \emptyset.$   
\end{theorem}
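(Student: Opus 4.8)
The plan is to derive Tverberg's theorem from the \emph{colorful Carath\'eodory theorem} by means of Sarkaria's tensor trick; this keeps the argument purely combinatorial and linear-algebraic. Recall the colorful Carath\'eodory theorem: if $C_1,\dots,C_{D+1}\subseteq\mathbb{R}^{D}$ are finite sets each of whose convex hull contains the origin, then there is a transversal $c_i\in C_i$ with $0\in\mathrm{conv}\{c_1,\dots,c_{D+1}\}$. If one does not wish to cite this, I would prove it first by the standard exchange argument: start from an arbitrary transversal; as long as $0$ lies outside the current hull, separate it from the hull by a hyperplane, note that some color class must contain a point strictly on the $0$-side, swap that point in, and observe that the distance from $0$ to the hull strictly decreases; a finiteness/compactness argument then forces the process to stop, which can only happen when $0$ is inside.

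Now the reduction. Write $S=\{x_1,\dots,x_m\}$ and lift each point to $\hat x_i=(x_i,1)\in\mathbb{R}^{d+1}$. Fix vectors $e_1,\dots,e_r\in\mathbb{R}^{r-1}$ that are the vertices of a simplex centered at the origin, so that $\sum_{j=1}^{r}e_j=0$ and the only linear dependence among $e_1,\dots,e_r$ is this one. For each $i\in[m]$ put
\[
C_i=\{\,\hat x_i\otimes e_j\;:\;j=1,\dots,r\,\}\ \subseteq\ \mathbb{R}^{d+1}\otimes\mathbb{R}^{r-1}\ \cong\ \mathbb{R}^{(d+1)(r-1)} .
\]
Since $\tfrac1r\sum_{j}\hat x_i\otimes e_j=\hat x_i\otimes\bigl(\tfrac1r\sum_j e_j\bigr)=0$, the origin lies in $\mathrm{conv}(C_i)$ for every $i$. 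The ambient dimension is $D=(d+1)(r-1)$, and the hypothesis is exactly $m\ge D+1$, so the colorful Carath\'eodory theorem applies to (any $D+1$ of) the $C_i$: we obtain a choice $j(i)\in[r]$ for each $i$ and weights $\mu_i\ge 0$ with $\sum_i\mu_i=1$ such that $\sum_i\mu_i\,(\hat x_i\otimes e_{j(i)})=0$.

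It remains to decode a Tverberg partition. Set $S_j=\{\,i:j(i)=j\,\}$ and $w_j=\sum_{i\in S_j}\mu_i\hat x_i\in\mathbb{R}^{d+1}$, so that $\sum_{j=1}^{r}w_j\otimes e_j=0$. Reading this equation coordinate by coordinate in $\mathbb{R}^{d+1}$ and using that every linear relation among the $e_j$ is a multiple of $\sum_j e_j$, we conclude $w_1=\dots=w_r=:w$. The last coordinate of each $\hat x_i$ equals $1$, so the last coordinate of $w_j$ is $\sum_{i\in S_j}\mu_i$; hence these $r$ numbers are all equal, and since they sum to $1$ each equals $1/r>0$. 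In particular every $S_j$ is nonempty. Writing $w=(p,1/r)$ with $p\in\mathbb{R}^{d}$, the first $d$ coordinates give $\sum_{i\in S_j}\mu_i x_i=p$, i.e. $\sum_{i\in S_j}(r\mu_i)x_i=rp$ with $\sum_{i\in S_j}r\mu_i=1$; thus $rp\in\mathrm{conv}\{x_i:i\in S_j\}$ for every $j$, and therefore $rp\in\bigcap_{j=1}^{r}\mathrm{conv}(S_j)\neq\emptyset$. (Colorful Carath\'eodory only controls $D+1$ of the points, so a priori $S_1\cup\dots\cup S_r$ may be a proper subset of $[m]$; one then distributes the leftover points among the parts arbitrarily, which only enlarges the convex hulls.)

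I expect the only genuinely substantial step to be the colorful Carath\'eodory theorem itself — either invoking it, or, for a self-contained account, carrying through the exchange argument and its termination. Everything after that is bookkeeping, the one place to be careful being the tensor step: matching the dimension count $D+1=(d+1)(r-1)+1$ to the hypothesis, and correctly using ``the only relation among the $e_j$ is $\sum_j e_j=0$'' to force $w_1=\dots=w_r$. As an alternative worth noting, for $r$ a prime power one could instead run a configuration-space/test-map scheme on the deleted join $(\Delta_{[m]})^{*r}_{\Delta}$ against a suitable representation sphere; Sarkaria's route has the advantage of covering all $r$ at once.
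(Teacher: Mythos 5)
Your argument is correct, and it is necessarily a different route from the paper's: the paper does not prove this statement at all, but quotes it as Tverberg's classical theorem with a citation to Tverberg's 1966 paper, whose original proof is a direct geometric/combinatorial argument quite unlike yours. What you give is the standard Sarkaria--B\'ar\'any--Onn proof: lift to $\mathbb{R}^{d+1}$, tensor with simplex vertices $e_1,\dots,e_r\in\mathbb{R}^{r-1}$ whose only relation is $\sum_j e_j=0$, and apply colorful Carath\'eodory in dimension $D=(d+1)(r-1)$, which matches the hypothesis $m\ge D+1$ exactly; the decoding of $\sum_j w_j\otimes e_j=0$ into $w_1=\dots=w_r$, the use of the lifted last coordinate to force each part to carry weight $1/r$ (hence be nonempty), and the distribution of leftover points are all handled correctly. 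This buys a proof valid for \emph{all} $r$ by purely linear-algebraic means, whereas the configuration-space/test-map machinery that the paper actually uses (deleted joins, connectivity, Volovikov's theorem) only reaches the topological generalization for prime powers $r=p^{\nu}$ --- a trade-off you correctly flag yourself. The one place your write-up is looser than it should be is the sketched exchange proof of colorful Carath\'eodory: after taking the point $z$ of the current transversal's hull nearest to the origin, you must use Carath\'eodory to find a color class \emph{not needed} in the representation of $z$ (or minimize the distance over all transversals) before swapping in a point $c$ with $\langle c,z\rangle\le 0$; swapping within an arbitrary color class does not guarantee the strict decrease of the distance. Since you also allow simply invoking colorful Carath\'eodory as a known theorem, this does not affect the correctness of the overall proposal.
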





The original Tverberg's theorem can be reformulated. Suppose $N=(r-1)(d+1)$, then for each affine map $f:\Delta^{N} \rightarrow \mathbb{R}^{d}$ there exist $r$ nonempty disjoint faces $\Delta_1, \dots, \Delta_r$ of $\Delta^{N}$ such that $f(\Delta_1)\cap \dots \cap f(\Delta_r) \neq \emptyset$.

This theorem may be extended to the Topological Tverberg theorem \cite{Vol, OTGTT}. Suppose $f\colon \Delta^{(r-1)(d+1)} \rightarrow \mathbb{R}^d$ is an arbitrary continuous map  and $r=p^{\nu}$ is a prime power. Then the statement of the theorem remains true: there exist $r$ nonempty disjoint faces $\Delta_1, \dots, \Delta_r$ of $\Delta^{N}$ such that $f(\Delta_1)\cap \dots \cap f(\Delta_r) \neq \emptyset$.

There are different ways to extend and generalise the Topological Tverberg theorem. For example, we may demand that for a chosen simplicial complex $K \subseteq \Delta^{N}$ simplices $\Delta_i$ lie in $K$ . In \cite{TverType} the authors proved, among other things, the following theorem:

\begin{theorem}[Tverberg theorem for collectively unavoidable complexes]
\label{TCUC}
Suppose ${N=(r-1)(d+2)}$ and $m=N+1$. Let $\mathcal{K}=\langle K_1, \dots, K_r \rangle$ be a collectively $r$-unavoidable family of subcomplexes of $\Delta^{N}$, where $r=p^{\nu}$ is a power of a prime number. Assume that $K_i$ is $(m,k)$-balanced for each $i=1, \dots, r$. Then for each continuous map $f \colon \Delta^{N} \rightarrow \mathbb{R}^d$ there exist $r$ disjoint faces $\Delta_i$ of $\Delta^N$ such that $f(\Delta_1)\cap \dots \cap f(\Delta_r) \neq \emptyset$ and $\Delta_i \in K_i$ for each $i=1, \dots, r$.  
\end{theorem}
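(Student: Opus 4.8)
The plan is to run the standard configuration-space/test-map scheme, with all the novelty concentrated in one connectivity estimate proved by discrete Morse theory.

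\textbf{Step 1 (test map).} Assume for contradiction that $f$ has no such Tverberg partition. Put $\hat f(x)=(f(x),1)\in\mathbb{R}^{d+1}$ and define $F\colon |SymmDelJoin(\mathcal{K})|\to(\mathbb{R}^{d+1})^{r}$ on a point $\lambda_{1}x_{1}\oplus\dots\oplus\lambda_{r}x_{r}$ (the present $x_{i}$ having pairwise disjoint supports, the support of $x_{i}$ a face of some $K_{\pi(i)}$) by $F=(\lambda_{1}\hat f(x_{1}),\dots,\lambda_{r}\hat f(x_{r}))$. This map is $S_{r}$-equivariant for the coordinate-permuting actions, and the usual barycentric bookkeeping shows that $F$ meets the thin diagonal $D=\{(w,\dots,w)\}$ exactly at points coming from a Tverberg partition with $\Delta_{i}\in K_{i}$; by hypothesis there is none, so $F$ maps into $(\mathbb{R}^{d+1})^{r}\setminus D$. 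Composing with the $S_{r}$-equivariant deformation retraction of $(\mathbb{R}^{d+1})^{r}\setminus D$ onto the unit sphere $S(W_{r}^{\oplus(d+1)})$ of the orthogonal complement $D^{\perp}$, where $W_{r}$ is the reduced regular representation of $S_{r}$, yields an $S_{r}$-equivariant map
$$\Phi\colon |SymmDelJoin(\mathcal{K})|\longrightarrow S\big(W_{r}^{\oplus(d+1)}\big),\qquad \dim S\big(W_{r}^{\oplus(d+1)}\big)=(d+1)(r-1)-1 .$$
Restricting the group to the regular subgroup $G=(\mathbb{Z}/p)^{\nu}\le S_{r}$ (here $r=p^{\nu}$), the $G$-action on $|SymmDelJoin(\mathcal{K})|$ is free — a nontrivial regular element permutes the $r$ blocks in orbits of size $>1$, and equal pairwise-disjoint blocks must be empty — so $\Phi$ is a free $G$-map.

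\textbf{Step 2 (equivariant obstruction).} By Volovikov's prime-power generalization of the Borsuk--Ulam theorem (Dold's theorem already suffices when $r$ is prime), an $n$-connected space carrying a free $(\mathbb{Z}/p)^{\nu}$-action admits no $(\mathbb{Z}/p)^{\nu}$-equivariant map to the fixed-point-free representation sphere $S(W_{r}^{\oplus(d+1)})$ of dimension $n=(d+1)(r-1)-1$. Hence the theorem reduces to proving that $SymmDelJoin(\mathcal{K})$ is $\big((d+1)(r-1)-1\big)$-connected, which contradicts the existence of $\Phi$.

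\textbf{Step 3 (connectivity via discrete Morse theory — the main point).} The ambient deleted join $(\Delta^{N})^{*r}_{\Delta}\cong[r]^{*m}$ is $(m-2)$-connected, i.e. $(N-1)$-connected; and since $N=(r-1)(d+2)$ we have $(N-1)-\big((d+1)(r-1)-1\big)=r-1$, so passing to the subcomplex $SymmDelJoin(\mathcal{K})$ we may lose only $r-1$ in connectivity. The plan is to build an acyclic matching on the simplices of $(\Delta^{N})^{*r}_{\Delta}$ that do \emph{not} lie in $SymmDelJoin(\mathcal{K})$ — a simplex $(A_{1},\dots,A_{r})$ being excluded precisely when no reordering places $A_{1},\dots,A_{r}$ inside $K_{1},\dots,K_{r}$ respectively — and to verify that every critical cell of the matching has dimension $>(d+1)(r-1)$. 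Here collective $r$-unavoidability is the combinatorial engine: it guarantees that for every disjoint tuple at least one block is admissible, which is what lets one pair off the excluded cells; and the $(m,k)$-balanced hypothesis, together with the numerology $N=(r-1)(d+2)$ (which, combined with unavoidability, also tacitly pins down $k$ relative to $d$), forces the surviving critical cells into high dimension. Discrete Morse theory then gives that the pair $\big((\Delta^{N})^{*r}_{\Delta},\,SymmDelJoin(\mathcal{K})\big)$ is $(d+1)(r-1)$-connected, and the long exact sequence of the pair — using that $(\Delta^{N})^{*r}_{\Delta}$ is $(N-1)$-connected with $N-1\ge (d+1)(r-1)-1$ — delivers the required connectivity of $SymmDelJoin(\mathcal{K})$. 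The construction of this matching and the proof that it is acyclic with no low-dimensional critical cells are the delicate points and the main obstacle; this is the technical core to be imported from \cite{TverType} (with roots in \cite{ABC}).
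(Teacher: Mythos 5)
Your Steps 1 and 2 are correct and coincide with the standard reduction used in the paper for its own Theorem \ref{CTCRUC} (and in \cite{TverType} for Theorem \ref{TCUC}): a test map $SymmDelJoin(\mathcal{K})\to(\mathbb{R}^{d+1})^{r}\setminus D$, the observation that the regular subgroup $(\mathbb{Z}/p)^{\nu}\leq S_r$ acts freely on the deleted join, Volovikov's theorem, and the arithmetic $m-r-1=(d+1)(r-1)-1$, so that everything hinges on $SymmDelJoin(\mathcal{K})$ being $(m-r-1)$-connected. The problem is Step 3: you do not prove this connectivity statement. You describe a plan (match the cells of $[r]^{*m}$ outside $SymmDelJoin(\mathcal{K})$, check that all critical cells have dimension $>(d+1)(r-1)$, then run the long exact sequence of the pair) but you give no matching, no acyclicity argument, and no dimension count for the critical cells; even the way the two hypotheses enter --- collective $r$-unavoidability and $(m,k)$-balancedness --- is only gestured at (``tacitly pins down $k$''). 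Since this connectivity lemma is the entire mathematical content of the theorem beyond the routine configuration-space/test-map scheme, declaring it ``the technical core to be imported from \cite{TverType}'' leaves a genuine gap: with that import the statement being proved is essentially the quoted theorem itself.

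For comparison, the route actually taken (in \cite{TverType}, and in this paper for the rainbow analogue, Theorem \ref{HCon}) is not the relative one you sketch: the acyclic matching is built directly on $SymmDelJoin(\mathcal{K})$, processing the blocks $A_1,\dots,A_r$ color by color (or vertex by vertex) so that exactly one critical $0$-cell survives and every other critical cell is maximal of dimension at least the required threshold; Forman's theorem (Theorem \ref{MTDTM}) then gives the connectivity outright, with unavoidability used precisely to force at least $s$ (here all, $s=1$ playing no role in the uncolored case) of the blocks of an unmatched cell to be ``full''. Your relative variant is not unreasonable --- the pair $\bigl([r]^{*m},SymmDelJoin(\mathcal{K})\bigr)$ is indeed $(m-r)$-connected, and relative discrete Morse theory would convert a complement matching with high-dimensional critical cells into that statement --- but as written it is a proposal for where a proof might live, not a proof, and the delicate points you yourself flag (construction of the matching, acyclicity, the role of balancedness in the critical-cell bound) are exactly the ones that must be carried out.
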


Another direction is to color vertices and to demand faces to be rainbow. In \cite{ColTver}, in particular, the authors obtained the following theorem:

\begin{theorem}[ Balanced colored Tverberg theorem]
\label{BCT}
 Assume that $r = p^{\nu}$ is a prime power. Let integers $d \geq 1$, $k \geq 0$, and $0 < s \leq r$ be such that $r(k - 1) + s = (r - 1)d$. Let $[m] = C_{1} \uplus \dots \uplus C_{k+1}$ be a coloring of vertices of $\Delta^{N}$, where $N+1=m = (2r - 1)(k + 1)$ and $|C_i| = 2r - 1$ for each $i=1, \dots, k+1$. Then for every continuous map $f \colon \Delta^{N} \rightarrow \mathbb{R}^d$
there are $r$ pairwise disjoint rainbow faces $\Delta_1, \dots, \Delta_r$ of $\Delta^{N}$ such that $f(\Delta_{1}) \cap \dots \cap f(\Delta_{r}) \neq \emptyset $, with $dim(\Delta_i) \leq k$ for $i=1, \dots, s$ and $dim(\Delta_j) \leq k-1$ for $j=s+1, \dots, r$.
\end{theorem}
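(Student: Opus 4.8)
I would run the configuration-space/test-map scheme, controlling the configuration space with discrete Morse theory and closing with an equivariant Borsuk--Ulam statement for the group $G\cong(\mathbb{Z}/p)^{\nu}$ (recall $r=p^{\nu}$). Assume for contradiction that some continuous $f\colon\Delta^{N}\to\mathbb{R}^{d}$ admits no rainbow Tverberg $r$-partition of the stated form. Put $W_{r}=\{y\in\mathbb{R}^{r}:y_{1}+\dots+y_{r}=0\}$, and let $X\subseteq(Col\Delta_{[m]})^{*r}_{\Delta}$ be the subcomplex consisting of the partitions $(A_{1},\dots,A_{r},B)$ with $A_{1},\dots,A_{r}$ pairwise disjoint rainbow sets such that $|A_{i}|\le k+1$ for $i\le s$ and $|A_{i}|\le k$ for $i>s$; the first bound is automatic since there are only $k+1$ colors, so $X$ records exactly the allowed face shapes. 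A point of $X$ has the form $z=\sum_{i}\lambda_{i}(x_{i},i)$ with $x_{i}$ in the carrier of $A_{i}$ whenever $\lambda_{i}>0$; the assignment $z\mapsto\sum_{i}\lambda_{i}(f(x_{i}),i)\in(\mathbb{R}^{d})^{*r}$ avoids the thin diagonal $\{\,(y/r,\dots,y/r):y\in\mathbb{R}^{d}\,\}$ exactly because no Tverberg partition of the prescribed shape exists, and post-composing with the usual deformation retraction of $(\mathbb{R}^{d})^{*r}$ minus that diagonal onto $S(W_{r}^{\oplus d})$ yields a map $\tau\colon X\to S(W_{r}^{\oplus d})$ equivariant for the coordinate permutations preserving $X$.

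Because $Col\Delta_{[m]}$ is the join over the $k+1$ color classes and each class has $2r-1$ vertices, the full deleted join $(Col\Delta_{[m]})^{*r}_{\Delta}$ is the $(k+1)$-fold join of the chessboard complex $\Delta_{2r-1,r}$, and $X$ is the subcomplex carved out by the size bounds on the $A_{i}$. Following the discrete Morse strategy of \cite{ColTver} and \cite{TverType}, I would assemble an explicit acyclic matching on $X$ out of matchings on the chessboard factors, treating the indices $i\le s$ and $i>s$ asymmetrically so as to absorb the truncation, designed so that the only critical cells are one vertex together with cells of dimension at least $(r-1)d$. This is exactly where $|C_{i}|=2r-1$ and the identity $r(k-1)+s=(r-1)d$ enter: the top dimension of $X$ is $s(k+1)+(r-s)k-1=rk+s-1=(r-1)d+r-1$, and the matching must collapse away all of dimensions $1,\dots,(r-1)d-1$. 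The consequence is that $X$ is $\bigl((r-1)d-1\bigr)$-connected, which coincides with $\dim S(W_{r}^{\oplus d})$.

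The group $G=(\mathbb{Z}/p)^{\nu}$ acts on $S(W_{r}^{\oplus d})$ freely, since the reduced regular representation of $G$ has no trivial summand; by Dold's theorem a free $G$-space that is $\bigl((r-1)d-1\bigr)$-connected admits no $G$-equivariant map into the $\bigl((r-1)d-1\bigr)$-dimensional free $G$-sphere $S(W_{r}^{\oplus d})$, which would contradict the existence of $\tau$ and finish the proof. The genuine obstacle is that the free $G$-action permutes all $r$ coordinates of the deleted join and therefore does \emph{not} preserve the partition $\{1,\dots,s\}\sqcup\{s+1,\dots,r\}$ that defines $X$ whenever $0<s<r$, so $\tau$ is only $(S_{s}\times S_{r-s})$-equivariant as built. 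I would resolve this in one of two ways: either (a) by a constraint-method reduction --- replacing the asymmetric face-size constraint by an extra coordinate of the test map, so that one may work with the fully symmetric complex of all rainbow faces of dimension $\le k$ and with a sphere $S\bigl(W_{r}^{\oplus(d+e)}\bigr)$ carrying the honest free $G$-action --- or (b) by applying a Volovikov-type fixed-point-free mapping theorem directly to $X$, after verifying that a suitable $\mathbb{Z}/p\le G$ already acts on $X$ with empty fixed-point set. I expect this last step --- reconciling the asymmetry caused by $s$ with a large enough group action --- to be the hard part; the rest is the now-routine ``discrete Morse theory on joins of chessboard complexes'' machinery, and the role of $s$ is precisely what keeps this statement from being a mere reindexing of the balanced colored Tverberg and Van Kampen--Flores theorems.
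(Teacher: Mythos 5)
Your general scheme (test map, discrete Morse theory, Volovikov) is the right one, but the step you yourself flag as "the hard part" --- reconciling the asymmetric shape constraint with a transitive $p$-group action --- is not a detail to be deferred: it is the crux, and neither of your proposed fixes closes it. The resolution, both in the paper and in its source for this theorem, is to abandon the ordered complex $X$ and work instead with the \emph{symmetrized deleted join}: for the tuple $K_i=Col\Delta_{[m]}^{(k)}$ ($i\le s$), $K_i=Col\Delta_{[m]}^{(k-1)}$ ($i>s$), one takes $\mathrm{SymmDelJoin}(\mathcal{K})=\bigcup_{\pi\in S_r}K_{\pi(1)}*_{\Delta}\dots*_{\Delta}K_{\pi(r)}$, i.e.\ the union of all $S_r$-translates of your $X$. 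This complex is $S_r$-invariant by construction, so the join test map is honestly equivariant for the regular $(\mathbb{Z}/p)^{\nu}\le S_r$, and Volovikov applies; a point of the symmetrized complex hitting the diagonal still yields the asserted faces after relabeling, since the intersection condition and the multiset of dimension bounds are permutation-symmetric. Your option (b) cannot be repaired inside $X$: any proper subgroup of the regular elementary abelian group has more than one orbit on $[r]$, hence fixes a nonzero vector of $W_r$ and acts with fixed points on the target sphere, so you cannot shrink the group; and the full transitive group does not preserve $X$ when $0<s<r$. Your option (a) is not carried out, and encoding an \emph{asymmetric} per-part size constraint by additional symmetric test-map coordinates is exactly what is not obvious; the known proofs do not proceed that way. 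Once the configuration space is symmetrized, the real work is the acyclic matching on $\mathrm{SymmDelJoin}(\mathcal{K})$ showing it is $(rk+s-2)$-connected (one critical vertex, all other critical cells of dimension at least $rk+s-1$), which is what the paper proves and then quotes to deduce this theorem as the special case of its main result.

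There is also a dimension slip that would sink the sketch even after symmetrizing. For the join test map, the avoided diagonal $D\cong\mathbb{R}^d$ sits in $(\mathbb{R}^d)^{*r}\cong\mathbb{R}^{dr+r-1}$, and its complement is equivariantly homotopy equivalent to a sphere of dimension $(d+1)(r-1)-1$, i.e.\ to $S(W_r^{\oplus(d+1)})$, not to $S(W_r^{\oplus d})$: the weight coordinates cannot be discarded, because a point with unequal weights but equal images still witnesses a Tverberg partition only when all weights are positive. Consequently the configuration space must be $((d+1)(r-1)-1)$-connected, which by $r(k-1)+s=(r-1)d$ is exactly $(rk+s-2)$-connected, and a matching whose non-vertex critical cells merely have dimension at least $(r-1)d$ falls short by $r-1$; the threshold you must (and, with the correct matching, do) achieve is $rk+s-1=(d+1)(r-1)$.
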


Further we shall give an extension of this theorem.

\section{Colored Tverberg theorem for rainbow-unavoidable complexes}

\begin{definition} Suppose $[m]$ is colored. A partition $(A_{1}, \dots, A_{r}, B)$ of $[m]$ is called rainbow if $A_i$ is rainbow for each $i=1, \dots,r$.
\end{definition}

\begin{definition} Let  $\mathcal{K}=\langle K_{1}, \dots, K_{r}\rangle$ be a family of simplicial complexes,  $K_i \subseteq \Delta_{m}$. Suppose $[m]$ is colored and $K_i$ is rainbow for each $i=1,\dots, r$. Then $\mathcal{K}$ is called \textit{$(r,s)$-rainbow unavoidable} if for each ordered rainbow partition $(A_{1}, \dots, A_{r}, B)$  there exists a set of indices $\{i_1, \dots, i_s\}$ such that $A_{i_j} \in K_{i_j}$ for each $j=1, \dots, s$.
  \end{definition}

\begin{theorem}[Colored Tverberg theorem for rainbow-unavoidable complexes]
\label{CTCRUC}
 Assume that $r = p^{\nu}$ is a prime power. Let integers $d \geq 1$, $k \geq 0$, and $0 < s \leq r$ be such that $r(k - 1) + s = (r - 1)d$. Let $[m] = C_{1} \uplus \dots \uplus C_{k+1}$ be a coloring of vertices of $\Delta^{N}$, where $N+1=m = (2r - 1)(k + 1)$ and $|C_i| = 2r - 1$ for each $i=1, \dots, k+1$. Let $\mathcal{K}=\langle K_1, \dots, K_r \rangle$ be a collectively $(r,s)$-rainbow unavoidable family of subcomplexes of $\Delta^{N}$. Assume that $K_i$ is $(m,k)$-rainbow balanced for each $i=1, \dots, r$. Then for each continuous map $f \colon \Delta^{N} \rightarrow \mathbb{R}^d$ there exist $r$ disjoint faces $\Delta_i$ of $\Delta^N$ such that $f(\Delta_1)\cap \dots \cap f(\Delta_r) \neq \emptyset$ and $\Delta_i \in K_i$ for each $i=1, \dots, r$.  
\end{theorem}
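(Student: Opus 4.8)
The plan is to follow the configuration-space/test-map scheme that underlies both Theorem~\ref{TCUC} and Theorem~\ref{BCT}, but to run it on the \emph{rainbow} symmetrized deleted join of the tuple $\mathcal{K}$. First I would set up the relevant space: let $X = SymmDelJoin(\mathcal{K}) \cap (Col\Delta_{[m]})^{*r}$, the subcomplex of the symmetrized deleted join consisting of those partitions $(A_1,\dots,A_r,B)$ that are rainbow and satisfy $A_{\pi(i)} \in K_{\pi(i)}$ for some $\pi \in S_r$. The group $\mathbb{Z}/p \subseteq S_r$ (in fact the full relevant $p$-group acting on the $r$ join factors) acts freely on $X$ away from the diagonal, exactly as in the classical setup, because the $A_i$ in any point of $X$ are pairwise disjoint and at most one of them can be empty of a given color class — the freeness argument is unchanged from \cite{ColTver}. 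The test map is the usual one: a continuous $f\colon \Delta^N \to \mathbb{R}^d$ induces an equivariant map $X \to (W_r^{\oplus (d+1)})$ into the deleted product of $\mathbb{R}^d$, and a Tverberg partition with the prescribed membership constraints corresponds exactly to a zero of this map, i.e.\ to the nonexistence of an equivariant map $X \to S(W_r^{\oplus(d+1)})$. So the whole theorem reduces to showing that $X$ is ``sufficiently highly connected'', namely $(\dim)$-connected enough that Dold's theorem / the standard Borsuk--Ulam-type obstruction forces such a map not to exist; the target sphere has dimension $(r-1)(d+1)-1$, so it suffices to prove $X$ is $\big((r-1)(d+1)-1\big)$-connected, and the arithmetic $r(k-1)+s = (r-1)d$ together with $m=(2r-1)(k+1)$ is precisely what is needed to make the connectivity bound come out right (this is the same bookkeeping as in \cite{ColTver}).

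The core of the argument — and the place where the rainbow-unavoidability hypothesis enters — is the connectivity estimate for $X$. Here I would import the discrete Morse theory technology from \cite{TverType} and \cite{ColTver}. The idea is to build an explicit discrete Morse function (equivalently, an acyclic matching) on the ambient rainbow symmetrized deleted join $SymmDelJoin\big(\langle Col\Delta_{[m]}^{(k)},\dots\rangle\big)$ so that the unmatched (critical) cells all lie in $X$ and all have dimension $\geq (r-1)(d+1)$, plus one critical cell in low dimension accounting for the reduced homology in degree $-1$. The matching is assembled in stages: one first handles the ``balanced'' part using the rainbow $(m,k)$-balancedness ($Col\Delta_{[m]}^{(k-1)} \subseteq K_i \subseteq Col\Delta_{[m]}^{(k)}$), which controls the dimensions of the $A_i$ and forces most of the vertices into the ``garbage'' block $B$; then one uses collective $(r,s)$-rainbow unavoidability to match away every face of $SymmDelJoin$ that is \emph{not} in $X$ — for such a face, the definition of $(r,s)$-rainbow unavoidability produces $s$ indices $i_1,\dots,i_s$ with $A_{i_j}\in K_{i_j}$, and when the face is outside $X$ one can find a canonical vertex (the smallest available vertex in a canonically chosen color class, as in the ``first available color'' matchings of \cite{ColTver}) to add or delete, producing an acyclic matching whose critical cells are exactly the desired high-dimensional faces of $X$. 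The sizes $|C_i| = 2r-1$ are exactly what guarantees that in every rainbow partition there is enough room in each color class to perform this matching move (one vertex per color per block, $r$ blocks, plus slack), which is why the Balanced Colored Tverberg theorem uses this particular value.

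I would then conclude exactly as in \cite{ColTver}: the discrete Morse inequalities give that $X$ is $\big((r-1)(d+1)-1\big)$-connected (the homology vanishes up to that degree because there are no critical cells there, and simple connectivity for $r\geq 2$ is automatic from the cell structure), the $\mathbb{Z}/p$-action on $X$ is free, and the target of the test map is the sphere $S\big(W_r^{\oplus(d+1)}\big) \simeq S^{(r-1)(d+1)-1}$ with the standard free $\mathbb{Z}/p$-action; by the Borsuk--Ulam/Dold theorem no equivariant map $X \to S^{(r-1)(d+1)-1}$ exists, hence the test map has a zero, hence the required Tverberg partition with $\Delta_i \in K_i$ exists. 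The main obstacle is engineering the acyclic matching in the second stage so that it is simultaneously (a) equivariant or at least compatible with the symmetrization, (b) acyclic after being glued to the balanced-part matching, and (c) leaves precisely the faces of $X$ (and nothing outside $X$) as critical cells in the top dimension — this is where ``we add some important details'' relative to \cite{BCT}, since the unavoidability hypothesis must be used cell-by-cell rather than globally, and one must check that the canonical matching moves from the two stages do not interfere. Everything else is either a direct citation or the routine arithmetic of comparing $\dim X$, the connectivity bound, and $(r-1)(d+1)-1$ using the hypotheses $r(k-1)+s=(r-1)d$ and $m=(2r-1)(k+1)$.
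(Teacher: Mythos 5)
Your overall scheme (a test map $f^{*r}$ on the symmetrized deleted join, reduction to a connectivity bound for that complex, and discrete Morse theory to get the bound) is the same as the paper's, and your arithmetic is right: the critical cells must have dimension at least $rk+s-1=(r-1)(d+1)$ and the complex must be $(rk+s-2)=\bigl((r-1)(d+1)-1\bigr)$-connected. However, the equivariant conclusion as you state it fails for genuine prime powers. You invoke Dold/Borsuk--Ulam for a \emph{free} $\mathbb{Z}/p$-action, but for $r=p^{\nu}$ with $\nu>1$ no order-$p$ subgroup of the group permuting the $r$ join factors acts freely on the target sphere $S\bigl(W_r^{\oplus(d+1)}\bigr)$: an element of order $p$ which is not an $r$-cycle has at least two orbits on $[r]$, hence fixes a nonzero vector of $W_r$ and therefore fixes points of the sphere; the same holds for every nontrivial element of $(\mathbb{Z}_p)^{\nu}$. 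This is precisely why the paper concludes with Volovikov's theorem (which needs only a fixed-point free $(\mathbb{Z}_p)^{\nu}$-action together with the connectivity/acyclicity of the domain), not with Dold. As written, your argument proves the theorem only for $r$ prime.

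The Morse-theoretic step is also misdirected. To prove that $SymmDelJoin(\mathcal{K})$ is $(rk+s-2)$-connected, the acyclic matching must be built on $SymmDelJoin(\mathcal{K})$ itself, as the paper does; an acyclic matching on the ambient complex $SymmDelJoin(\langle Col\Delta_{[m]}^{(k)},\dots\rangle)$ whose critical cells ``lie in $X$'' only controls the homotopy type of the ambient complex, not of $X$, unless you additionally produce a collapse of the ambient complex onto $X$, which you do not indicate. Moreover, $(r,s)$-rainbow unavoidability is a condition on full rainbow partitions of $[m]$, so it cannot be applied ``cell-by-cell'' to faces outside $X$ to match them away. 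In the paper it enters only once, in the dimension count for unmatched simplices: if $(A_1,\dots,A_r,B)$ survives the color-by-color, block-by-block matching, every block of size $k$ is augmented by its canonical vertex $a_j^{i_j}$ (well defined for the missing color), producing a genuine rainbow partition; unavoidability gives $s$ indices with the augmented blocks in the corresponding $K_{i_j}$, and since each augmented block is \emph{not} in its $K_j$ (by ``step $j$''-maximal admissibility), those $s$ blocks must already have had size $k+1$, forcing dimension at least $rk+s-1$. This augmentation argument is the genuinely new ingredient relative to \cite{ColTver}, and it is missing from your sketch; without it the unavoidability hypothesis is never correctly brought to bear.
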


\begin{proof}

This is a standard argument found, for example, in \cite{TverType, ColTver, UBUT}.

Assume the converse. Then there is a map $f \colon \Delta_{[m]} \rightarrow \mathbb{R}^{d}$ such that $${f(A_1)\cap f(A_2) \cap \dots \cap f(A_r) = \emptyset}$$ for every $(A_1, \dots, A_r, B) \in SymmDelJoin(\mathcal{K})$. We consider the
$r$-fold join $f^{*r}=\Phi_{f}$. This is $S_r$-equivariant mapping $$\Phi_{f}\colon SymmDelJoin(\mathcal{K})\rightarrow (\mathbb{R}^{d})^{*r} \cong \mathbb{R}^{dr+r-1}.$$
Let $D \cong \mathbb{R}^{d}$ be the diagonal subspace of $(\mathbb{R}^{d})^{*r}$. From the condition on $f$ it follows that the image under $\Phi_{f}$ does not intersects with the diagonal $D$. And $(\mathbb{R}^{d})^{*r} \setminus D$ is $S_r$-homotopy equivalent to a sphere $\mathbb{S}^{(d+1)(r-1)-1}$. But since $SymmDelJoin(\mathcal{K})$ is $(rk + s - 2)=(rd+r-d-2)$-connected, then we have a contradiction with Volovikov's theorem  \cite{Z21}.
\end{proof}

All we need now is to show that $SymmDelJoin(\mathcal{K})$ is $(rk + s - 2)$-connected.

\begin{theorem}
\label{HCon}
Let $[m] = C_{1} \uplus \dots \uplus C_{k+1}$ be a coloring of vertices of $\Delta_{m}$, where ${m = (2r - 1)(k + 1)}$ and $|C_i| = 2r - 1$ for each $i=1, \dots, k+1$. Let $\mathcal{K}=\langle K_1, \dots, K_r \rangle$ be a collectively $(r,s)$-rainbow unavoidable family of subcomplexes of $\Delta^{N}$. Assume that $K_i$ is $(m,k)$-rainbow balanced for each $i=1, \dots, r$. Then $SymmDelJoin(\mathcal{K})$ is $(rk + s - 2)$-connected.
\end{theorem}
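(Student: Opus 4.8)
The plan is to prove connectivity of $SymmDelJoin(\mathcal{K})$ via discrete Morse theory, following the strategy used in~\cite{ColTver} for the balanced colored Tverberg theorem but incorporating the $(r,s)$-unavoidability hypothesis. First I would fix a reference complex $\mathcal{R}$ that is easy to analyze, namely the symmetrized deleted join of the family of full rainbow $k$-skeleta, or more precisely of the $(r,s)$-parametrized family $\langle Col\Delta_{[m]}^{(k)}, \dots, Col\Delta_{[m]}^{(k)}, Col\Delta_{[m]}^{(k-1)}, \dots, Col\Delta_{[m]}^{(k-1)}\rangle$ with $s$ copies of the $k$-skeleton and $r-s$ copies of the $(k-1)$-skeleton, and establish that this reference complex is $(rk+s-2)$-connected. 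Since each $K_i$ is $(m,k)$-rainbow balanced, we have $Col\Delta_{[m]}^{(k-1)}\subseteq K_i\subseteq Col\Delta_{[m]}^{(k)}$, so $SymmDelJoin(\mathcal{K})$ sits between $SymmDelJoin$ of the all-$(k-1)$-skeleton family and $SymmDelJoin$ of the all-$k$-skeleton family; the $(r,s)$-rainbow unavoidability is precisely what guarantees that every rainbow partition with at most $s$ of the $A_i$ of dimension $\le k$ and the rest of dimension $\le k-1$ actually lies in $SymmDelJoin(\mathcal{K})$, which is how the reference complex $\mathcal{R}$ embeds.

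The core technical step is to build an explicit acyclic matching (discrete vector field) on the cells of $SymmDelJoin(\mathcal{K})$ whose set of critical (unmatched) cells consists only of cells of dimension $\ge rk+s-1$, together with a single critical cell in dimension $0$ (or, more carefully, critical cells concentrated in the top degrees). A cell of $SymmDelJoin(\mathcal{K})$ is a partition $(A_1,\dots,A_r,B)$ with each $A_i$ rainbow and belonging to $K_{\pi(i)}$ for some permutation $\pi$; its dimension is $\sum_i(|A_i|-1) = \sum_i |A_i| - r$. The matching would be constructed colorwise: within each color class $C_j$ (of size $2r-1$), one decides which of the $r$ sets $A_i$ receives a vertex of that color, and an element-based pairing (e.g.\ the standard "first available vertex" matching used for deleted joins of skeleta) is set up so that cells failing to use their full quota of vertices in some color class are matched to cells that do. The bound $|C_j|=2r-1$ is exactly what is needed so that a rainbow partition can afford to give every $A_i$ a vertex of color $j$ with one vertex left over in $B$, so that the matching on color $j$ has the right critical structure; summing the per-color contributions over the $k+1$ colors and accounting for the $s$ versus $r-s$ split in allowed dimensions yields critical cells only from dimension $rk+s-1$ on. Then the standard discrete Morse inequality (a CW complex with an acyclic matching having no critical cells in dimensions $1,\dots,rk+s-1$ except the one $0$-cell is $(rk+s-2)$-connected) gives the conclusion; one must also check simple connectivity separately if $rk+s-2\ge 1$, which again follows from the absence of critical $1$-cells.

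The main obstacle I anticipate is verifying that the proposed matching is genuinely acyclic and that it is well defined on $SymmDelJoin(\mathcal{K})$ rather than merely on the reference complex — the subtlety is that $SymmDelJoin(\mathcal{K})$ is a union over all $\pi\in S_r$ of deleted joins, so a face of a cell recorded with permutation $\pi$ may also be expressible with a different permutation, and the matching rule must be consistent across these overlapping charts. The $(r,s)$-rainbow unavoidability must be used to certify that whenever the matching rule wants to pair a cell $\sigma$ with a coface $\tau$ obtained by adding a vertex to some $A_i$, that larger cell really does lie in $SymmDelJoin(\mathcal{K})$, i.e.\ that at least $s$ of the sets still belong to the respective $K$'s; this is where the $(r,s)$ hypothesis enters as opposed to plain $r$-unavoidability, and it is where I expect to add the "important details" the introduction alludes to. A clean way to organize this is to first prove the statement for the extremal family (the skeleta) by an induction on $k$ or by a direct nerve/Mayer--Vietoris-free Morse argument, and then transfer along the inclusions $Col\Delta_{[m]}^{(k-1)}\subseteq K_i\subseteq Col\Delta_{[m]}^{(k)}$, showing the matching descends to every intermediate $\mathcal{K}$ because unavoidability prevents the creation of spurious critical cells below dimension $rk+s-1$.
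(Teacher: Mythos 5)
Your overall framework (an explicit acyclic matching on $SymmDelJoin(\mathcal{K})$ built colorwise, a single critical $0$-cell, all other critical cells in dimension $\geq rk+s-1$, then Forman's theorem) is indeed the paper's strategy, but two specific load-bearing claims in your plan are wrong. First, the embedding of the reference complex fails: $(r,s)$-rainbow unavoidability does \emph{not} guarantee that every rainbow partition with $s$ parts of dimension $\leq k$ and $r-s$ parts of dimension $\leq k-1$ lies in $SymmDelJoin(\mathcal{K})$. Take $r=2$, $s=1$, $k=1$, $m=6$, $C_1=\{x_1,x_2,x_3\}$, $C_2=\{y_1,y_2,y_3\}$, and $K_1=K_2=Col\Delta^{(1)}_{[m]}\setminus\{F\}$ where $F=\{x_1,y_1\}$. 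Both complexes are $(6,1)$-rainbow balanced, and the pair is $(2,1)$-rainbow unavoidable (two disjoint rainbow sets cannot both equal $F$, so if $A_1=F$ then $A_2\in K_2$, and otherwise $A_1\in K_1$); yet the partition $(F,\{x_2\},B)$ lies in your reference complex $\mathcal{R}$ and not in $SymmDelJoin(\mathcal{K})$, since $F\notin K_1\cup K_2$. Consequently the plan of proving the theorem for the extremal skeletal family and then ``transferring along the inclusions'' cannot work as stated: connectivity is not monotone under inclusion, and restricting a matching to a subcomplex generically creates new low-dimensional critical cells.

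Second, the role you assign to the $(r,s)$-hypothesis — certifying that the coface produced by the matching rule lies in $SymmDelJoin(\mathcal{K})$ — is not how it can be used (your own counterexample above shows it gives no such certificate), and no certificate is needed: the paper simply matches two simplices only when both are admissible and still unmatched. Where $(r,s)$-rainbow unavoidability genuinely enters, and what is missing from your outline, is the dimension bound for critical cells. One first characterizes an unmatched simplex $(A_1,\dots,A_r,B)$ of positive dimension: for every $i$, either $|A_i|=k+1$, or $|A_i|=k$ and $A_i$ cannot be enlarged by its distinguished least missing-color vertex $a_i^{j}$ without destroying admissibility. Then one applies $(r,s)$-rainbow unavoidability to the augmented partition in which each size-$k$ part is enlarged by that vertex; since these enlarged sets do not lie in the corresponding $K_i$, at least $s$ of the original parts must already have size $k+1$, giving $\dim\geq rk+s-1$. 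Without this counting step (and without an acyclicity argument — the paper shows the sequence of pivot vertices strictly decreases lexicographically along gradient paths), your proposal does not close the proof.
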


\textbf{Proof of Theorem \ref{HCon} .} We shall proof this theorem with the methods of discrete Morse theory \cite{F}.  But first let us briefly discus the main definitions and ideas from this theory.

    Let $K$ be a simplicial complex. A simplex $\alpha \in K$ is denoted as $\alpha^{p}$ if $dim(\alpha)=p$. A matching $(\alpha^{p}, \beta^{p+1})$ of simplices is called a \textit{discrete vector field} if the following conditions holds:
    \begin{itemize}
        \item Each simplex of the complex appears in no more then one pair.
        \item For each mathed pair $(\alpha^{p}, \beta^{p+1})$, the simplex $\alpha^{p}$ is the facet of the simplex $\beta^{p+1}$.
    \end{itemize}
    A \textit{gradient path} is a sequence of simplices $\alpha^{p}_0, \beta^{p+1}_0, \dots, \alpha^{p}_k, \beta^{p+1}_k, \alpha^{p}_{k+1}$ satisfying the following conditions:
    \begin{itemize}
        \item  $\alpha^{p}_i$ and $\beta^{p+1}_i$ are matched for each $i=0, \dots, k$.
        \item  $\alpha^{p}_{i+1}$ is a facet of $\beta^{p+1}_i$ for each $i=0, \dots, k$.
        \item $\alpha^{p}_i \neq \alpha^{p}_i$.
    \end{itemize} 
    A gradient path is closed if $\alpha^p_0=\alpha^p_{k+1}$. A \textit{discrete Morse function} on a simplicial complex is a discrete vector field without closed paths. For a fixed discrete Morse function a simplex $\sigma \in K$ is called \textit{critical} if $\sigma$ is unmatched.

    The main idea of this theory is to contract all matched pairs of simplices along the gradient paths. During this procedure we reduce simplicial complex $K$ to a cell complex with critical simplices as cells. This leads us to the following theorem: 
    \begin{theorem}[\cite{F}]
        \label{MTDTM}
        Assume that we have a fixed discrete Morse function on $K$. Suppose that there is only one critical simplex of dimension $0$ and all other critical simplices have dimension $\geq N$. Then $K$ is $(N-1)$-connected.
    \end{theorem}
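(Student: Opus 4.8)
The plan is to obtain this as a consequence of the fundamental collapsing theorem of discrete Morse theory, which I would first state in the precise form needed and then combine with the hypothesis on the dimensions of the critical simplices.

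First I would establish the structure theorem: if $K$ carries a discrete Morse function (an acyclic matching, in the sense defined above), then $K$ is homotopy equivalent to a CW complex $K_{c}$ with exactly one cell of dimension $p$ for each critical $p$-simplex of $K$. The idea is to refine the matching to a total order on the ``Morse blocks'' --- the critical simplices together with the matched pairs --- so that each initial union $K^{(t)}$ of the first $t$ blocks is again a subcomplex of $K$, and then to examine the two possible transitions from $K^{(t-1)}$ to $K^{(t)}$. If the $t$-th block is a critical $p$-simplex $\sigma$, then all facets of $\sigma$ already lie in $K^{(t-1)}$, so $K^{(t)}$ is obtained from $K^{(t-1)}$ by attaching a single $p$-cell along its boundary, changing the homotopy type by a $p$-cell attachment. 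If the $t$-th block is a matched pair $(\alpha^{p}, \beta^{p+1})$, then $\alpha$ is a free face of $\beta$ inside $K^{(t)}$: the only cofaces of $\alpha$ in $K$ are $\beta$ and cofaces forced into earlier blocks, so removing $\beta$ and then $\alpha$ is an elementary collapse and $K^{(t-1)} \hookrightarrow K^{(t)}$ is a homotopy equivalence. Iterating from the empty complex, $K$ is built, up to homotopy, by attaching one $p$-cell for every critical $p$-simplex; the resulting complex is $K_{c}$.

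With the structure theorem in hand, the rest is immediate. By hypothesis $K$ has a single critical $0$-simplex and no critical simplices of dimension $1, \dots, N-1$, so $K_{c}$ is a CW complex with exactly one $0$-cell and with all other cells of dimension $\geq N$. Hence the $(N-1)$-skeleton $K_{c}^{(N-1)}$ is a single point. Since for any CW complex the inclusion of the $(N-1)$-skeleton induces isomorphisms on $\pi_{i}$ for $i < N-1$ and an epimorphism on $\pi_{N-1}$, we conclude $\pi_{i}(K_{c}) = 0$ for all $i \leq N-1$; that is, $K_{c}$ --- and therefore $K$ --- is $(N-1)$-connected.

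The main obstacle is the structure theorem itself, and specifically the claim that crossing a matched pair is a homotopy equivalence: this is exactly where the ``no closed gradient path'' condition is used, and it is not literally true that an arbitrary acyclic matching presents its pairs as free faces one at a time in a naive order. The standard remedy is either to induct on the number of simplices, peeling off a matched pair whose upper simplex is maximal (such a pair exists whenever the matching is acyclic), or to invoke the existence of a numerical discrete Morse function compatible with the matching and use its sublevel filtration; either route requires a genuine classical lemma rather than a one-line observation, and it is the part I would write out in full detail.
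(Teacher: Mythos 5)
Your proposal is correct. Note that the paper does not prove this statement at all --- it is quoted verbatim from Forman's work with a citation, and your argument is precisely the standard one underlying that reference: the fundamental structure theorem (collapse matched pairs, attach one cell per critical simplex, justified by acyclicity via a linear extension of the matching order or a compatible numerical Morse function), followed by the observation that the resulting CW model has a point as its $(N-1)$-skeleton, so cellular approximation kills $\pi_i$ for $i \leq N-1$. You have also correctly identified the one genuinely nontrivial point (that an acyclic matching really can be processed so each pair appears as a free-face collapse), which is exactly the content of the classical lemma one must import from Forman.
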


    In order to proof Theorem \ref{HCon} we need to construct a discrete Morse function on $\mathcal{K}$ such that there is only one critical simplex of dimension $0$ and all other critical simplices have dimension $\geq rk + s - 1 $.

 
 Note that every simplicial complex $K_i$ consists of all rainbow simplices of dimension $k-1$ plus a family of rainbow simplices of dimension $k$. Note that if a rainbow simplex has dimension $k$ then this simplex is maximal and contains all possible $k+1$ colors.
 
  A simplex $(A_1, \dots A_r, B)$ is called maximal if it is not a face of any other simplex of $SymmDelJoin(\mathcal{K})$.

 A partition $(A_1, \dots A_r, B)$ is called admissible if there is a permutation $\pi$ such that $A_{\pi(i)} \in K_{i}$ (and hence $A_i$ is rainbow) for each $i=1, \dots, r$. In other words, a partition is admissible if it belongs to $SymmDelJoin(\mathcal{K})$.

Now we shall start the construction the desired Morse function. The construction is done in $r$ big steps. Every big step is divided into $k+1$ small steps. Every step is a matching procedure of simplices from $SymmDelJoin(\mathcal{K})$.

\bigskip

\textbf{Step 1.1} Every color contains $2r-1$ vertices. Suppose that they are enumerated by $\{1, \dots, 2r-1\}$. Assume we have a simplex  $(A_1, \dots A_r, B)$. Set $$a_{1}^{1}=min\left(\left(A_1\cup B\right) \cap C_1\right).$$ Then we match $(A_1, \dots A_r, B \cup a_{1}^{1})$ with $(A_1 \cup a_{1}^{1}, \dots A_r, B)$ whenever both these simplices are admissible. Now an admissible simplex of the type $(A_1 \cup a_{1}^{1}, \dots A_r, B)$ is unmatched iff $A_1 \cup a_{1}^{1}=a_{1}^{1}$ and   $B=[m]\setminus \{a^1_1\}$. This is a simplex of dimension $0$ and it will stay unmatched until the end of the procedure. If a simplex of the type $(A_1, \dots A_r, B \cup a_{1}^{1})$ is unmatched, then either $A_1$ contains a vertex colored by $C_1$, or by adding $a^1_1$ to $A_1$ we make $(A_1, \dots A_r, B \cup a_{1}^{1})$ not admissible.

\bigskip

\textbf{Step 1.2}
Set $$a_{1}^{2}=min\left(\left(A_1\cup B\right) \cap C_2\right).$$ Then we match $(A_1, \dots A_r, B \cup a_{1}^{2})$ with $(A_1 \cup a_{1}^{2}, \dots A_r, B)$ whenever both these simplices are unmatched and admissible.

\begin{itemize}
   \item If a simplex of the type $(A_1, \dots A_r, B \cup a_{1}^{2})$ (with dimension $>0$) is unmatched, then either $A_1$ contains a vertex colored by $C_2$, or by adding $a^2_1$ to $A_1$ we make ${(A_1, \dots A_r, B \cup a_{1}^{1})}$ not admissible. This is \textit{Step 1.2-Type 1 unmatched simplex}.
   \item If a simplex of the type $(A_1 \cup a_{1}^{2}, \dots A_r, B)$ is unmatched, then $(A_1, \dots A_r, B \cup a_{1}^{2})$ have been matched by $a_{1}^{1}$. Therefore $a_{1}^{1} \in B$, $A_1 \cap C_1 = \emptyset$, and $|A_1 \cup a_{1}^{2}|=k$. This is \textit{Step 1.2-Type 2 unmatched simplex}.
\end{itemize}

\textbf{Steps $1.3, \dots 1.k+1$} are done in the similar way. Further we shall use the similar notation. ``Step i.j -- Type 1'' means we can not move $\{a_i^j\}$ from $B$ to $A_i$. ``Step i.j -- Type 2'' means we can not move $\{a_i^j\}$ from $A_i$ to $B$.  The following lemmas are needed for the sequel.

\begin{definition}
    A simplex $(A_1, \dots, A_r, B)$ is called ``step $i$''-maximal admissible if $|A_i|=k$ and by adding an arbitrary vertex to $A_i$ we obtain a not admissible simplex. 
\end{definition}

\begin{lemma}
    Excluding the unique unmatched zero-dimensional simplex, if a simplex  $(A_1, \dots A_r, B)$ is unmatched after the first step, then:
\begin{itemize}
    \item either $|A_1|=k+1$, or 
    \item $|A_1|=k$ and $(A_1, \dots A_r, B)$ is ``step 1''-maximal admissible.
\end{itemize}    
\end{lemma}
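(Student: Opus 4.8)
The statement concerns what simplices can survive unmatched after completing Big Step~1 (the $k+1$ small steps 1.1 through $1.(k+1)$). The plan is to analyze the matching small-step by small-step, tracking which color classes $C_j$ have already been ``processed'' for the first block $A_1$. After Step~$1.j$ we will have attempted, for every admissible pair differing only by moving $a_1^j$ between $B$ and $A_1$, to match them; the key bookkeeping fact is that a simplex left unmatched at the end of Step~$1.j$ either already contains a vertex of color $C_j$ in $A_1$, or cannot admissibly absorb $a_1^j$ into $A_1$. First I would make precise the invariant: after Step~$1.j$, for each $j' \le j$, either $A_1 \cap C_{j'} \neq \emptyset$, or adjoining $\min((A_1 \cup B)\cap C_{j'})$ to $A_1$ destroys admissibility (a ``Type~1'' obstruction), or the simplex was consumed as a ``Type~2'' partner earlier. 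One must check that later small steps do not re-match a simplex that an earlier small step already matched — this is guaranteed because each small step only touches pairs in which both members are still unmatched, and the matched status is monotone.

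The core of the argument: suppose $(A_1, \dots, A_r, B)$ is unmatched after Step~$1.(k+1)$ and is not the unique zero-dimensional critical simplex. Since $K_1$ is $(m,k)$-rainbow balanced, every rainbow simplex of $K_1$ has dimension $k-1$ or $k$, so $|A_1| \le k+1$. If $|A_1| = k+1$ we are in the first case of the lemma and there is nothing more to prove. So assume $|A_1| \le k$. I would first argue $|A_1| = k$ exactly: if $|A_1| < k$, then since $A_1$ is rainbow it misses at least two colors, say $C_{j}$; because $A_1 \cup a_1^{j}$ still has dimension $\le k-1$, it lies in $\Delta_{[m]}^{(k-1)} \subseteq K_1$ and remains rainbow, and the rest of the partition is unchanged, so the larger simplex is still admissible — hence Step~$1.j$ would have matched our simplex (either directly, or it would have been matched at an even earlier step), contradicting unmatchedness. (Here one has to be slightly careful: ``still admissible'' uses that only the first block changed and the other $A_i$, $B$ are only shrunk or enlarged by one vertex; since $A_1 \cup a_1^j$ is rainbow of dimension $\le k-1$ it is in $K_1 \supseteq \Delta_{[m]}^{(k-1)}$, and admissibility of the whole tuple via the identity permutation on the first slot is preserved.) Once $|A_1| = k$, the same reasoning shows that for the one color $C_{j_0}$ with $A_1 \cap C_{j_0} = \emptyset$, adjoining $a_1^{j_0} = \min((A_1 \cup B)\cap C_{j_0})$ to $A_1$ must yield a \emph{non}-admissible simplex (otherwise Step~$1.j_0$ matches), and for every other color $C_j$ we have $|A_1 \cap C_j| = 1$, so $A_1$ already contains all those colors and adjoining $a_1^j$ is impossible anyway (not rainbow). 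Thus adding \emph{any} vertex to $A_1$ breaks admissibility, which is exactly ``step~1''-maximal admissible.

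The main obstacle I anticipate is the interaction between the small steps — specifically, ruling out that a simplex unmatched after Step~$1.j$ could later get matched in Step~$1.j'$ with $j' > j$ in a way that invalidates the clean invariant, and conversely making sure the ``it would have been matched earlier'' arguments are not circular. The clean way to handle this is to observe that the matchings performed in different small steps are on disjoint sets of pairs (Step~$1.j$ only ever pairs simplices differing by the vertex $a_1^j$, and $a_1^j \in C_j$, so the symmetric difference of a matched pair at Step~$1.j$ is a single vertex of color $C_j$), hence the union over all small steps of Big Step~1 is automatically a valid discrete vector field, and the ``unmatched after the full first step'' condition is simply ``unmatched by each of the $k+1$ small-step matchings.'' I would also need to confirm that $A_1 \cup B$ always meets $C_j$ when we need $a_1^j$ to be defined: since $|C_j| = 2r-1 \ge 1$ and the $A_i$ for $i \ge 2$ together use at most $r-1$ vertices of $C_j$ (each being rainbow), $A_1 \cup B$ contains at least $r \ge 1$ vertices of $C_j$, so $a_1^j$ is well-defined throughout. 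With these two points nailed down, the lemma follows by the case analysis above; I expect the write-up to be short once the invariant is stated correctly.
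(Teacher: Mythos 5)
Your reduction to the fiber picture (fixed $A_2,\dots,A_r$, so that $a_1^1,\dots,a_1^{k+1}$ depend only on $T=A_1\cup B$) and the observation that $a_1^j$ is always well defined are fine, but the final inference has a genuine gap: from ``adjoining $a_1^{j_0}$ to $A_1$ destroys admissibility'' and ``adjoining a vertex of a color already present in $A_1$ destroys rainbowness'' you conclude that adjoining \emph{any} vertex destroys admissibility. This skips the vertices of the missing color $C_{j_0}$ other than the minimal one $a_1^{j_0}$: the matching procedure never probes them, and nothing you prove forces $A_1\cup\{v\}$ to be non-admissible for $v\in B\cap C_{j_0}$, $v\neq a_1^{j_0}$. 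This is not a cosmetic omission. Take $r=2$, $k=1$, $s=d=1$, $m=6$, $C_1=\{1,2,3\}$, $C_2=\{4,5,6\}$, $K_2=Col\Delta_{[m]}^{(1)}$ and $K_1$ consisting of all vertices together with the single edge $\{2,5\}$; this family is $(m,k)$-rainbow balanced and collectively $(2,1)$-rainbow unavoidable. The simplex $(\{2\},\{3,6\},\{1,4,5\})$ is admissible, survives Step 1.1 (since $a_1^1=1$ and $\{1,2\}$ is not rainbow) and Step 1.2 (since $a_1^2=4$ and $(\{2,4\},\{3,6\},\{1,5\})$ is not admissible because $\{2,4\}\notin K_1$ and $\{3,6\}\notin K_1$), yet it is not ``step 1''-maximal admissible, because $(\{2,5\},\{3,6\},\{1,4\})$ is admissible. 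So the strong conclusion cannot be reached by your argument as written; what the step-by-step analysis actually yields (and what the paper's later dimension count uses) is only the weaker property that adjoining the \emph{designated} vertex $a_1^{j_0}$ of the missing color breaks admissibility.

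A second, smaller soft spot: your parenthetical ``either directly, or it would have been matched at an even earlier step'' does not address the real obstruction, namely that Step $1.j$ fails to match $\sigma$ when its partner $(A_1\cup a_1^j,\dots)$ has already been consumed at an earlier small step — the paper's own ``Step 2.1-Type 2'' simplices arise exactly by this mechanism, so it cannot be dismissed. For the case $|A_1|<k$ you need an explicit induction over the small steps (tracking for each processed color either presence in $A_1$ or non-admissibility of adding $a_1^{j'}$) showing that the partner of the relevant color is still available, or a direct argument ruling out an infinite regress of earlier matches; your stated invariant asserts this but its preservation is never proved. The paper itself states the lemma without proof, relying on the inline Type 1/Type 2 bullets, so your sketch mirrors the intended route; but as it stands the passage from those bullets to ``step 1''-maximal admissibility is exactly where both your write-up and the terse original need either a weakened statement or a modified matching rule.
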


\textbf{Step 2.} During this step we will treat $A_2$.

\bigskip

\textbf{Step 2.1.} Set $$a_{2}^{1}=min\left(\left(\left(A_2\cup B\right) \setminus \left[1,a_{1}^{1}\right]\right)\cap C_{1}\right).$$ Then we match  $(A_1, A_2, \dots A_r, B \cup a_{2}^{1})$ with $(A_1, A_2 \cup a_{2}^{1}, \dots A_r, B)$ whenever both these simplices are admissible and unmatched. 
\begin{itemize}
    \item If a simplex of the type $(A_1, A_2, \dots A_r, B \cup a_{2}^{1})$ (with dimension $>0$) is unmatched, then either $A_2$ contains a vertex colored by $C_1$, or by adding $a^1_2$ to $A_2$ we make ${(A_1, \dots A_r, B \cup a_{1}^{1})}$ not admissible. This is \textit{Step 2.1-Type 1 unmatched simplex}.
    \item If a simplex of the type $(A_1, A_2 \cup a_{2}^{1}, \dots A_r, B)$ is unmatched, then it is ``step 1''-maximal admissible and $|A_2 \cup a_{2}^{1}|=k+1$. This is \textit{Step 2.1-Type 2 unmatched simplex}.
\end{itemize}

\textbf{Step 2.2.} Set $$a_{2}^{2}=min\left(\left(\left(A_2\cup B\right) \setminus \left[1,a_{1}^{2}\right]\right)\cap C_{2}\right).$$ Then we match  $(A_1, A_2, \dots A_r, B \cup a_{2}^{2})$ with $(A_1, A_2 \cup a_{2}^{2}, \dots A_r, B)$ whenever both these simplices are admissible and unmatched. 

\textbf{Steps $2.3, \dots, 2.k+1$} are done in the similar way.

\begin{lemma}
    Excluding the unique unmatched zero-dimensional simplex, if a simplex  $(A_1, \dots A_r, B)$ is unmatched after step $2$, then:
\begin{itemize}
    \item either $|A_2|=k+1$, or 
    \item $|A_2|=k$ and $(A_1, \dots A_r, B)$ is ``step 2''-maximal admissible.
\end{itemize} 
\end{lemma}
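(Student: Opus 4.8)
The plan is to repeat, for the coordinate $A_2$, the analysis that proves the previous lemma for $A_1$, and to verify that the two new features of step $2$ -- a stricter admissibility condition, and the truncation $[1,a_1^j]$ in the definition of $a_2^j$ -- do not interfere with it.

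The first observation is that the substeps $2.1,\dots,2.(k+1)$ move only the vertices $a_2^1,\dots,a_2^{k+1}$ between $A_2$ and $B$; they never change $A_1$ (nor $A_3,\dots,A_r$). Hence a simplex unmatched after step $2$ was already unmatched after step $1$, so, not being the unique critical $0$-simplex, it satisfies the conclusion of the previous lemma: either $|A_1|=k+1$, or $|A_1|=k$ and $(A_1,\dots,A_r,B)$ is ``step $1$''-maximal admissible. I would carry this ``step $1$ status'' along, since it is exactly what describes the Type $2$ obstructions of step $2$ (as in the Step $2.1$ bullet).

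The main point is that, with $A_1,A_3,\dots,A_r$ frozen, step $2$ runs structurally the same matching as step $1$, now between $A_2$ and the varying set $B$, with ``admissible'' read in the stronger sense and $a_2^j$ in place of $a_1^j$. So I would rerun the step $1$ bookkeeping over $j=1,\dots,k+1$: a simplex surviving step $2.j$ unmatched is, relative to colour $C_j$, in one of three situations -- (i) $A_2$ already meets $C_j$; (ii) $a_2^j$ exists but adjoining it to $A_2$ yields a non-admissible partition (Type $1$); or (iii) a Type $2$ obstruction, $a_2^j\in A_2$ with its partner matched at an earlier substep, which by the Step $2.1$-style reasoning forces $A_2$ to be a full rainbow $k$-simplex, $|A_2|=k+1$, or else puts us in the ``step $2$''-maximal alternative. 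Combining these over the $k+1$ colours yields the dichotomy: either $|A_2|=k+1$; or every colour is met by $A_2$ except the ones ``blocked'' by admissibility, giving $|A_2|=k$ with a single blocked colour -- which, since $|C_i|=2r-1$ guarantees a vertex of that colour unused by every $A_i$, is exactly the statement that $(A_1,\dots,A_r,B)$ is ``step $2$''-maximal admissible. (The case $|A_2|\le k-1$ is excluded in the same spirit: there would be at least two colours neither met by $A_2$ nor forced by admissibility, and -- after checking that the relevant partners were not consumed at an earlier substep -- the minimal vertex of one of them could be adjoined to $A_2$, contradicting unmatchedness.)

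The parts I expect to need genuine care are: first, that the truncation does no damage, i.e.\ that $(A_2\cup B)\cap C_j\subseteq[1,a_1^j]$ forces $A_2$ to already meet $C_j$ -- this uses $a_1^j=\min\bigl((A_1\cup B)\cap C_j\bigr)$, admissibility of the partition, and $A_1\cap B=\emptyset$; second, the Type $2$ obstructions for $j\ge 2$, where the partner may have been matched inside step $2$ rather than in step $1$, so the clean ``$|A_2|=k+1$'' conclusion of the Step $2.1$ bullet has to be re-derived; and third, the ``not consumed earlier'' checks in the $|A_2|\le k-1$ case. All of these reduce to counting with $m=(2r-1)(k+1)$ and $|C_i|=2r-1$, which ensures every colour class keeps at least $r$ ``spare'' vertices throughout; the rest is a faithful transcription of the step $1$ argument.
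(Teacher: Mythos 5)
Your plan follows the same route as the paper: the paper gives this lemma no separate proof at all, intending it to be read off from the Type~1/Type~2 case analysis in Steps 2.1--2.$(k+1)$, and your substep-by-substep bookkeeping is a transcription of exactly that. Your opening observation is also correct and is implicitly used in the paper: step~2 only moves vertices between $A_2$ and $B$, so a simplex unmatched after step~2 was already unmatched after step~1 and inherits the step-1 dichotomy for $A_1$.

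As a proof, however, the proposal stops short at precisely the points that carry the content. First, your case analysis at substep $2.j$ tacitly assumes that the only obstructions to matching $\bigl((A_1,A_2,\dots,B\cup a_2^j),\,(A_1,A_2\cup a_2^j,\dots,B)\bigr)$ are ``$A_2$ already meets $C_j$'' or ``the enlarged simplex is not admissible''; the third possibility, that the intended partner was already consumed at step~1 or at an earlier substep of step~2, is only flagged as ``needing care'' and never ruled out, yet excluding it is exactly what makes the Type~1/Type~2 dichotomy exhaustive, and it is also what your $|A_2|\le k-1$ contradiction hinges on. Second, the conclusion ``$|A_2|=k$ and ``step 2''-maximal admissible'' requires that adjoining an \emph{arbitrary} vertex to $A_2$ destroys admissibility, whereas the matching procedure only ever tests the single vertex $a_2^j$ of the missing colour; since $K_i$ contains some rainbow $k$-faces and not others, different vertices of the missing colour need not behave alike, so your claim that the ``single blocked colour'' situation ``is exactly'' step-2-maximal admissibility asserts the lemma rather than derives it (the same issue is present, silently, in the paper). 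A smaller point in your favour: in the $|A_2|\le k-1$ case admissibility can never be the obstruction, because $A_2\cup a_2^j$ then has dimension at most $k-1$ and lies in every $K_i$ by rainbow-balancedness; invoking this would reduce your parenthetical argument to the single remaining question of whether the partner was matched earlier, which is where the real work lies.
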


\textbf{Steps 3,4, ... , and r - 1} go analogously. Now we can formulate the following lemma.

\begin{lemma} For $j=1, \dots, r-1$ the numbers $a_{j}^{i}$ are well-defined. Note that after the step $r-1$ if $(A_1, \dots A_r, B)$ is unmatched, then for  $i=1,\dots, r$ we have:
\begin{itemize}
    \item either $|A_i|=k+1$, or 
    \item $|A_i|=k$ and $(A_1, \dots A_r, B)$ is ``step $i$''-maximal admissible.
\end{itemize}
\end{lemma}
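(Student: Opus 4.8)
The statement to be proved is the last lemma: the numbers $a_j^i$ are well-defined for $j=1,\dots,r-1$, and after step $r-1$ any unmatched simplex $(A_1,\dots,A_r,B)$ other than the distinguished $0$-simplex satisfies, for each $i=1,\dots,r$, either $|A_i|=k+1$ or ($|A_i|=k$ and the simplex is ``step $i$''-maximal admissible). I would prove this by induction on the big step index $j$, using the earlier lemmas (the ``after step $1$'' and ``after step $2$'' dichotomies) as the base cases, so the only real content is the inductive step and the well-definedness claim.

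\medskip

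\textbf{Well-definedness of $a_j^i$.} First I would argue that each $a_j^i=\min\bigl(\bigl((A_j\cup B)\setminus[1,a_{j-1}^i]\bigr)\cap C_i\bigr)$ is defined, i.e.\ the set being minimized is nonempty, whenever we are at a simplex that is still in play at Step $j.i$. The key counting input is $|C_i|=2r-1$. The vertices of $C_i$ removed from consideration by the constraint $\setminus[1,a_{j-1}^i]$ are the ``small'' ones used in earlier big steps; after big steps $1,\dots,j-1$ at most $j-1$ vertices of $C_i$ have been permanently placed into the $A_\ell$'s (one per earlier big step in color $i$, namely $a_1^i,\dots,a_{j-1}^i$, when they were of Type~2/absorbed), and $A_j$ itself, being rainbow of dimension $\le k$, meets $C_i$ in at most one vertex. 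Since $j-1\le r-2$, we have $j-1+1=j\le r-1<2r-1=|C_i|$, so $(A_j\cup B)\cap C_i$ minus the forbidden initial segment is nonempty, hence the $\min$ exists. I would write this out carefully, tracking exactly which vertices of $C_i$ are excluded at Step $j.i$.

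\medskip

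\textbf{The inductive dichotomy.} Assume the claim holds after step $j-1$: every unmatched simplex (other than the $0$-simplex) has, for each $i<j$, either $|A_i|=k+1$ or $|A_i|=k$ with ``step $i$''-maximality. Big step $j$ only ever moves the vertex $a_j^\ell$ between $B$ and $A_j$, so it does not alter $A_i$ for $i\ne j$; hence the conditions on $A_1,\dots,A_{j-1}$ are preserved (one must check that admissibility and unmatched-ness of the two simplices in a step-$j$ pair are compatible with the inherited structure, which is exactly the ``Type~1/Type~2'' analysis performed in Steps $1.\bullet$ and $2.\bullet$, now repeated verbatim with index $j$). Then, exactly as in the ``after step $2$'' lemma, an unmatched simplex surviving step $j$ must have either $|A_j|=k+1$ (some color was already occupied in $A_j$, or absorbing $a_j^i$ broke admissibility once $A_j$ was already full) or $|A_j|=k$ and ``step $j$''-maximal admissible (adding any vertex to $A_j$ yields a non-admissible partition). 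Since the small steps $j.1,\dots,j.(k+1)$ sweep all $k+1$ colors, this covers $A_j$ completely. Running the induction up to $j=r-1$ gives the conclusion for all $i=1,\dots,r-1$; the statement about $A_r$ is obtained by noting that $B$ is then forced (it is $[m]$ minus a union of sets of total size $\ge r(k+1)-1$ out of $m=(2r-1)(k+1)$ vertices, leaving room), so $A_r=[m]\setminus(B\cup\bigcup_{i<r}A_i)$ and the same maximality/size dichotomy applies by the collective $(r,s)$-rainbow unavoidability together with the $(m,k)$-rainbow balancedness of $K_r$.

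\medskip

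\textbf{Main obstacle.} The delicate point is the bookkeeping in the well-definedness argument: one must be certain that the cumulative effect of the initial-segment exclusions $[1,a_{j-1}^i]$ across $j=1,\dots,r-1$ never exhausts a color class, and this is precisely why the hypothesis $|C_i|=2r-1$ (rather than something smaller) is needed. A secondary subtlety is verifying that the matching defined at big step $j$ is consistent with all matchings from steps $1,\dots,j-1$ — i.e.\ that a simplex matched earlier is never re-matched — which follows because each step only pairs simplices that are \emph{unmatched so far}, and because moving $a_j^\ell$ in or out of $A_j$ cannot undo an earlier pairing that involved a vertex $a_{j'}^{\ell'}$ with $j'<j$ (those vertices live in different $A$-slots or were fixed by a strictly earlier, lexicographically smaller choice). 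Once these two points are nailed down, the dichotomy is a routine upgrade of the already-proved cases.
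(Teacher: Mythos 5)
Your overall skeleton (induction over the big steps, with the step-1/step-2 analyses as the pattern, plus a counting argument for well-definedness based on $|C_i|=2r-1$) is the right reconstruction of what the paper leaves implicit after ``Steps $3,4,\dots,r-1$ go analogously''. However, the counting you give for the only genuinely delicate point --- well-definedness of $a_j^i$ --- is wrong, and wrong in a way that matters. You bound the unavailable vertices of $C_i$ by ``$(j-1)+1=j$'', omitting entirely the vertices of color $C_i$ that sit in the \emph{other} parts $A_\ell$, $\ell\neq j$ (up to $r-1$ of them, one per rainbow set), which are unavailable because the minimum is taken over $A_j\cup B$ only; you also assert without justification that $[1,a_{j-1}^i]$ contains at most $j-1$ vertices of $C_i$, which need not hold (the gaps between consecutive minima can be filled by color-$i$ vertices lying in other $A_\ell$'s). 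Symptomatically, your inequality $j\le r-1<2r-1$ never uses $j\le r-1$ in an essential way: the same argument would ``prove'' that $a_r^i$ is always well-defined, contradicting the paper's Step $r$, where ill-defined $a_r^i$ (Type 3 simplices) is explicitly allowed and needed. The correct count is: unavailable vertices of $C_i$ at Step $j.i$ are (a) at most $r-1$ lying in $A_\ell$, $\ell\neq j$, and (b) inside $[1,a_{j-1}^i]$, only the previous minima $a_1^i,\dots,a_{j-1}^i$ that lie in $B$ (any other vertex of $B\cap C_i$ in a gap $(a_{\ell-1}^i,a_\ell^i)$ would contradict minimality of $a_\ell^i$) together with at most one vertex of $A_j\cap C_i$; in total at most $(r-1)+(j-1)+1=r+j-1<2r-1$ exactly when $j\le r-1$, and equal to $2r-1$ when $j=r$, which is why the construction breaks precisely at the last step.

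A second gap is your treatment of $A_r$. After step $r-1$ nothing has constrained $A_r$, so the dichotomy for $i=r$ cannot be extracted from ``$B$ is forced'' plus unavoidability (and your size estimate ``total size $\ge r(k+1)-1$'' is unsupported: at that stage you only know $|A_i|\ge k$ for $i\le r-1$). In the paper the properties of $A_r$ are obtained only from the Step $r$ matching itself (Types 1, 2, 3), in the subsequent lemma on maximality; so either the statement should be read as concerning $i=1,\dots,r-1$ after step $r-1$, or the $i=r$ case must be deferred to and derived from the Step $r$ analysis --- it cannot be proved the way you propose. The inductive propagation of the dichotomy for $i\le r-1$ is fine as you describe it.
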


\textbf{Step r}

\bigskip

\textbf{Step r.1.} Suppose $(A_1, \dots, A_r, B)$ is unmatched. Set $$a_{r}^{1}=min\left(\left(\left(A_{r}\cup B\right) \setminus \left[1,a_{r-1}^{1}\right]\right) \cap C_{1}\right).$$ It is possible that $(A_{r}\cup B \setminus \left[1,a_{r-1}^{1}\right]) \cap C_{2}=\emptyset$ and $a_{r}^{1}$ is ill-defined. Then we see that $A_i \cap C_1>0$ for every $i=1, \dots, r$. Such unmatched simplices are called \textit{Step r.1 -- Type 3}. If $a_{r}^{1}$ is well defined, then we proceed as before.

\bigskip

\textbf{Step r.2.} Set $$a_{r}^{2}=min\left(\left(\left(A_{r}\cup B \right) \setminus \left[1,a_{r-1}^{2}\right]\right)\cap C_{2}\right).$$ If $a_{r}^{2}$ is ill-defined, then $A_i \cap C_2 >0$ for $i=1, \dots, r$ and this is \textit{Step r.2 -- Type 3} unmatched simplex. Otherwise we proceed as before.

\bigskip

\textbf{Steps r.2 -- r.k+1} are done in the same way.



\begin{lemma}
   If a simplex $(A_1, \dots A_r, B)$ of dimension $>0$ and is unmatched, then this simplex is maximal in $SymmDelJoin(\mathcal{K})$.
\end{lemma}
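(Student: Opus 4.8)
The plan is to show that an unmatched simplex $(A_1, \dots, A_r, B)$ of positive dimension, which survives all $r$ big steps, cannot be a proper face of any simplex of $SymmDelJoin(\mathcal{K})$. By the preceding lemma, after step $r-1$ each block $A_i$ with $i \le r-1$ satisfies either $|A_i| = k+1$ or $|A_i| = k$ together with being ``step $i$''-maximal admissible; and the step $r$ analysis adds the information about $A_r$, including the ``Type 3'' alternative that $A_i \cap C_\ell \neq \emptyset$ for all $i$ when $a_r^\ell$ is ill-defined. First I would fix such an unmatched $(A_1, \dots, A_r, B)$ and suppose, toward a contradiction, that it is a proper face of some admissible $(A_1', \dots, A_r', B') \in SymmDelJoin(\mathcal{K})$; then some block grows, i.e. there is an index $i_0$ and a vertex $v \in B \cap A_{i_0}'$, with $A_j \subseteq A_j'$ for all $j$.

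The key step is a case split on the status of the block $A_{i_0}$ recorded by the lemmas. If $|A_{i_0}| = k+1$, then $A_{i_0}$ is already a maximal rainbow face (dimension $k$ uses all $k+1$ colors, as noted right before Step 1.1), so it cannot be enlarged inside a rainbow complex $K_{i_0}$ — contradiction. If $|A_{i_0}| = k$ and $(A_1, \dots, A_r, B)$ is ``step $i_0$''-maximal admissible, then by definition adding any vertex to $A_{i_0}$ destroys admissibility, so in particular $(A_1, \dots, A_{i_0} \cup v, \dots, A_r, B \setminus v)$ is not admissible; since $(A_1', \dots, A_r', B')$ refines exactly this move in the $i_0$-th coordinate (it contains $A_{i_0} \cup v$), it too fails to be admissible — contradiction. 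I would also need to check the Step $r.\ell$ -- Type 3 simplices separately: there $A_i \cap C_\ell \neq \emptyset$ for every $i$, which forces $B \cap C_\ell = \emptyset$; combined with the earlier-step constraints on $A_1, \dots, A_{r-1}$ this pins down $A_r$ enough that no enlargement keeping the partition rainbow and admissible is possible.

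The main obstacle I anticipate is the bookkeeping for the last block $A_r$ and the interaction of the three unmatched types in Step $r$: unlike blocks $1, \dots, r-1$, the block $A_r$ is not treated by a clean ``move from $B$ to $A_r$ / move from $A_r$ to $B$'' dichotomy because the minima $a_r^\ell$ may be ill-defined, so one must argue that in the Type 3 situation the complement $B$ is color-deficient in a way that, together with the cardinality count $|A_1| + \dots + |A_r| + |B| = m = (2r-1)(k+1)$ and $|A_i| \le k+1$, leaves $B$ with no vertex that could be moved into any $A_i$ while preserving admissibility. Once the Type 3 case is dispatched, the remaining cases reduce cleanly to the two bullets of the preceding lemma and the observation that a positive-dimensional unmatched simplex has every block in the ``saturated'' state, so I would close the argument by remarking that maximality of $(A_1, \dots, A_r, B)$ is exactly the statement that no block can grow, which the case analysis has established.
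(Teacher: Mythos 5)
Your overall strategy is the same as the paper's: classify every block of an unmatched positive-dimensional simplex as ``saturated'' (either $|A_i|=k+1$, or $|A_i|=k$ together with ``step $i$''-maximal admissibility), and then conclude maximality because a $(k+1)$-block cannot grow inside a rainbow complex of dimension at most $k$, while a $k$-block cannot grow by maximal admissibility combined with the fact that $SymmDelJoin(\mathcal{K})$ is hereditary (your contradiction with a proper coface is exactly the paper's tacit ``we can see that it is not a face of any other simplex''). For the blocks $A_1,\dots,A_{r-1}$ this is complete and correct.

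The genuine gap is the last block: you never establish the dichotomy for $A_r$, and that is in fact the only substantive content of the paper's proof. The paper argues it directly from the Step $r$ case analysis: if every substep $r.\ell$ leaves the simplex unmatched via Type 1 or Type 3, then $A_r$ meets every color class $C_\ell$, hence $|A_r|=k+1$ and your first case applies; if some substep $r.j$ leaves it unmatched via Type 2, then $|A_r|=k$ and the simplex is ``step $r$''-maximal admissible, so your second case applies. You instead defer this to a ``separate check'' whose sketch does not work as stated: Type 3 at step $r.\ell$ (ill-definedness of $a_r^{\ell}$) only says that no color-$\ell$ vertex of $A_r\cup B$ exceeds $a_{r-1}^{\ell}$; it does \emph{not} force $B\cap C_\ell=\emptyset$, since $B$ may still contain smaller color-$\ell$ vertices that were skipped at earlier steps because the corresponding block $A_j$ already carried that color (a Type 1 situation). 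The relevant consequence of Type 3, as recorded at Step r.1 of the paper, is that $A_i\cap C_\ell\neq\emptyset$ for every $i$ -- in particular for $A_r$ -- and it is this fact, not a cardinality count on $B$, that closes the argument. Without the $A_r$ dichotomy your case split is incomplete whenever the growing block is $A_{r}$, so the proof as proposed does not yet yield maximality.
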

\begin{proof}

If a simplex $(A_1, \dots A_r, B)$ of dimension $>0$ and is unmatched, then we have $|A_r| \geq k$. Indeed, if $(A_1, \dots A_r, B)$ is always step $r$ -- type $3$ or type $1$, then all colors are present in $A_r$ and $|A_r|=k+1$. If for some $j$ the simplex $(A_1, \dots A_r, B)$ is step $r.j$ -- type $2$, then it is ``step $r$''-maximal admissible and $|A_r|=k$. Then for each $i=1, \dots, r$ we have $|A_i|\geq k$. Also, since for each $i=1, \dots, r$ either $|A_i|=k+1$, or $|A_i|=k$ and $(A_1, \dots A_r, B)$ is ``step $i$''-maximal admissible, we can see that $(A_1, \dots A_r, B)$ is not a face of any other simplex from $SymmDelJoin(\mathcal{K})$, hence this simplex is maximal.  So we can see that if a simplex $(A_1, \dots A_r, B)$ is unmatched after the procedure, then it is maximal in $SymmDelJoin(\mathcal{K})$.
\end{proof}

\begin{statement}

Excluding the unique unmatched zero-dimensional simplex, all unmatched simplices have at least $rk+s$ vertices.
\end{statement}

\begin{proof}
    Let $(A_1, \dots A_r, B)$ be an unmatched simplex. Note that if $|A_j|=k$ for some $j$, then $a_{j}^{i_{j}}$ is well defined for the missing color $C_{i_j}$. Suppose $\{A_{j_1}, \dots, A_{j_l}\}$ is the set of all $(k-1)$-dimensional simplices. Without loss of generality, we can assume that $A_i \in K_i$ for $i=1, \dots, r$. Also, assume $A_j'=A_j$ if $|A_j|=k+1$ and $A_j'=A_j \cup \{a_{j}^{i_{j}}\}$ if $|A_j|=k$. Consider the partition $(A_1', \dots, A_r', B')$. Since the collection $\langle K_1, \dots, K_r \rangle$ is $(r,s)$-rainbow unavoidable, there are at least $s$ simplices such that $A_{i_j}' \in K_{i_j}$ for $j=1, \dots, s$. Note that since $A_j \cup \{a_{j}^{i_{j}}\} \notin K_{j}$, we have at least $s$ simplices from $(A_1, \dots A_r, B)$ of dimension $k$, then $(A_1, \dots A_r, B)$ is at least $(rk+s-1)$-dimensional. 
\end{proof}

So from Theorem \ref{MTDTM} follows that $SymmDelJoin(\mathcal{K})$ is $rk+s-2$-connceted. Now we must only prove that the matching is without closed paths 


Suppose we have a gradient path $$\alpha_0^p, \beta_0^{p+1}, \alpha_1^p,..., \beta_m^{p+1}, \alpha_{m+1}^{p}.$$ For any simplex $\alpha$ we can define a sequence $$\Pi(\alpha)\colon=(a^1_1, a^2_1, \dots, a^1_r \dots, a^{k+1}_r),$$ where $a^i_j$ are from the matching procedure. They are well-defined for $j<r$. If $\alpha$ is matched on a step $i$, then $a^i_j$ is well-defined. If $a^i_r$ is ill-defined, then we can assume that $a^i_r=\infty$.

\begin{lemma}

 The gradient path $\Pi(\alpha)$ is strictly decreasing with respect to lexicographic order. From this follows that the gradient path is not closed.
\end{lemma}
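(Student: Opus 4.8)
$\def\Pi{\Pi}$

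The plan is to turn $\Pi$ into a function on every simplex of $SymmDelJoin(\mathcal{K})$ and show it strictly decreases along a gradient path. For a simplex $\sigma$ let $\Pi(\sigma)$ be the sequence of vertices $a^{j}_{i}(\sigma)$, read in the order in which the small steps $i.j$ of the construction are carried out, truncated at the step at which $\sigma$ first becomes matched (with an ill-defined entry $a^{j}_{r}$ read as $\infty$); such sequences are compared lexicographically, with the convention that a proper prefix precedes the longer sequence. For a gradient path $\alpha^{p}_{0},\beta^{p+1}_{0},\alpha^{p}_{1},\dots,\beta^{p+1}_{m},\alpha^{p}_{m+1}$ it then suffices to prove $\Pi(\alpha_{0})>\Pi(\alpha_{1})>\dots>\Pi(\alpha_{m+1})$: a strictly decreasing sequence in a finite totally ordered set has pairwise distinct terms, so $\alpha_{0}\neq\alpha_{m+1}$ and the path is not closed. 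Together with the preceding Statement and Theorem~\ref{MTDTM} this finishes the proof of Theorem~\ref{HCon}.

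Two observations drive the argument. First, for a fixed color $j$ the defining minima give $a^{j}_{1}(\sigma)<a^{j}_{2}(\sigma)<\cdots$ wherever defined, each $a^{j}_{i}(\sigma)$ lies in $C_{j}$, and it depends only on the color-$j$ parts of $A_{1}\cup B,\dots,A_{i}\cup B$. Consequently: (a) if $(\sigma,\tau)$ is a matched pair, matched at step $i.j$ by moving a vertex $v\in C_{j}$ between $B$ and $A_{i}$, then $\Pi(\sigma)=\Pi(\tau)$ --- the move leaves $A_{i}\cup B$ unchanged and removes only $v$ from $A_{i'}\cup B$ for $i'\neq i$, it does not touch colors $\neq j$, it changes no color-$j$ minimum at an earlier big step (there the minimum is $a^{j}_{i'}<v$) and it enters no color-$j$ minimum at a later big step (there one minimizes over vertices exceeding $a^{j}_{i}=v$); (b) moving a vertex out of some $A_{i'}$ into $B$ only enlarges the pools over which the minima are taken, so every defined entry $a^{j}_{i}$ can only decrease.

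Now consider one zig-zag $\alpha_{\ell}\to\beta_{\ell}\to\alpha_{\ell+1}$ of the path, where $\beta_{\ell}=\alpha_{\ell}\cup\{v_{\ell}\}$ is the match of $\alpha_{\ell}$ at step $i_{\ell}.j_{\ell}$ (with $v_{\ell}$ joining $A_{i_{\ell}}$) and $\alpha_{\ell+1}=\beta_{\ell}\setminus\{w\}$ for some $w\neq v_{\ell}$. By (a), $\Pi(\alpha_{\ell})=\Pi(\beta_{\ell})$, so it is enough to show $\Pi(\alpha_{\ell+1})<\Pi(\beta_{\ell})$. If some entry changes on passing from $\beta_{\ell}$ to $\alpha_{\ell+1}$, then by (b) the first one that changes strictly drops and we are done. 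Otherwise every entry is unchanged; since $v_{\ell}$ still lies in $A_{i_{\ell}}(\alpha_{\ell+1})$ with $a^{j_{\ell}}_{i_{\ell}}(\alpha_{\ell+1})=v_{\ell}$, one of two things happens: either $\alpha_{\ell+1}$ was already matched at some step before $i_{\ell}.j_{\ell}$, so $\Pi(\alpha_{\ell+1})$ is a proper prefix of $\Pi(\beta_{\ell})$ and hence strictly smaller; or $\alpha_{\ell+1}$ reaches step $i_{\ell}.j_{\ell}$ still unmatched, and then --- its facet $\alpha_{\ell+1}\setminus\{v_{\ell}\}$ being automatically admissible (faces of admissible simplices are admissible) --- the algorithm matches $\alpha_{\ell+1}$ downward at step $i_{\ell}.j_{\ell}$, contradicting the fact that in the gradient path $\alpha_{\ell+1}$ is matched with the larger simplex $\beta_{\ell+1}$. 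Hence $\Pi(\alpha_{\ell+1})<\Pi(\beta_{\ell})=\Pi(\alpha_{\ell})$, and the lemma follows by induction along the path.

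The point that needs genuine care is the second alternative above: the inference ``$\alpha_{\ell+1}$ reaches step $i_{\ell}.j_{\ell}$ unmatched $\Rightarrow$ it is matched downward there'' breaks if the downward partner $\alpha_{\ell+1}\setminus\{v_{\ell}\}=\alpha_{\ell}\setminus\{w\}$ has itself been consumed at an earlier step, for then $\alpha_{\ell+1}$ survives and is matched later, making $\Pi(\alpha_{\ell+1})$ strictly longer than $\Pi(\alpha_{\ell})$ with the same prefix --- the wrong direction. Excluding this is where the hypotheses are really used: one must show that under the rainbow-balanced and $(r,s)$-rainbow-unavoidable assumptions, and the counting guaranteed by $|C_{i}|=2r-1$ (which is exactly what keeps the $a^{j}_{i}$ defined for $i<r$), the facet $\alpha_{\ell}\setminus\{w\}$ cannot be matched before step $i_{\ell}.j_{\ell}$. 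I expect this to be done by splitting into the cases $w\in A_{i_{\ell}}$ versus $w\in A_{i'}$ with $i'\neq i_{\ell}$, and $w\in C_{j_{\ell}}$ versus $w\notin C_{j_{\ell}}$, and tracking which higher-dimensional faces can be admissible and hence which earlier matches are possible; the rest of the proof is formal.
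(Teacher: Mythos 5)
Your setup (the sequences $\Pi$, the observation that a matched pair has equal $\Pi$-value, the monotonicity of the $a^j_i$ under moving a vertex into $B$, and the reduction to a single zig-zag $\alpha_{\ell}\to\beta_{\ell}\to\alpha_{\ell+1}$) follows the same lexicographic-monovariant strategy as the paper, and the cases you do settle (some entry strictly drops; $\alpha_{\ell+1}$ matched at an earlier step) are handled correctly. But the proof is not complete: the decisive case is exactly the one you defer at the end. When no entry of $\Pi$ changes and $\alpha_{\ell+1}$ is still unmatched upon reaching step $i_{\ell}.j_{\ell}$, the matching rule pairs $\alpha_{\ell+1}$ downward only if its partner $\alpha_{\ell+1}\setminus\{v_{\ell}\}=\alpha_{\ell}\setminus\{w\}$ is admissible \emph{and still unmatched} at that moment. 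Admissibility of this face is automatic, but ``still unmatched'' is not: since admissibility is not monotone under adding vertices, the face $\alpha_{\ell}\setminus\{w\}$ can be eligible for an earlier (upward) match that was blocked for $\alpha_{\ell}$ itself, so it may well have been consumed before step $i_{\ell}.j_{\ell}$. In that scenario $\alpha_{\ell+1}$ survives and is matched upward at a strictly later step, and under your truncation-with-prefix convention $\Pi(\alpha_{\ell+1})>\Pi(\alpha_{\ell})$ --- the invariant moves the wrong way. You acknowledge this and write that you ``expect'' it to be handled by a case split on the position and color of $w$; that is a plan, not an argument, and since this is the entire nontrivial content of the lemma, the proposal does not yet prove it.

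For comparison, this is precisely what the paper's proof consists of: a four-case analysis according to whether the removed vertex comes from the same block $A_j$ with a larger or smaller color index, or from an earlier block $A_{j'}$ ($j'<j$), or a later one ($j'>j$), showing in each case that $\alpha_1$ is matched no later than the relevant step, the borderline subcases being downward matches at that very step, which terminate the gradient path. So your write-up reproduces the framing around the paper's argument but omits its core; to complete it you would need to carry out the case analysis you sketch (or otherwise rule out the ``partner already consumed'' scenario), using how admissibility interacts with the specific order of the steps.
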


\begin{proof}
    It suffice to proof the lemma for a short path $\alpha_0^p, \beta_0^{p+1}, \alpha_1^p, \beta_1^{p+1}.$ Let us consider 4 cases.
    \begin{enumerate}
        \item Suppose the pair $\alpha_0^p, \beta_0^{p+1}$ is matched by adding color $i$ to $A_j$ and $\alpha_1^p$ comes from $\beta_0^{p+1}$ by removing color $i'>i$ from $A_j$.
         \begin{itemize}
        \item either $\alpha_1^p$ is matched on Step $j.i$ by removing color $i$ from $A_j$ and then the path terminates here, or
        \item or  $\alpha_1^{p}$ is matched  before Step $j.i$.
    \end{itemize}
        \item Suppose the pair $\alpha_0^p, \beta_0^{p+1}$ is matched by adding color $i$ to $A_j$ and $\alpha_1^p$ comes from $\beta_0^{p+1}$ by removing color $i'<i$ from $A_{j}$. Then $\alpha_1^p$ is matched before Step $j.i$.
        \item Suppose the pair $\alpha_0^p, \beta_0^{p+1}$ is matched by adding color $i$ to $A_j$ and $\alpha_1^p$ comes from $\beta_0^{p+1}$ by removing color $i'$ from $A_{j'}$ for $j'<j$.
     \begin{itemize}
        \item either  $\alpha_1^{p}$ is matched by adding color $i'$ on Step $j',i'$, or
        \item or $\alpha_1^{p}$ is matched before Step $j',i'$.
    \end{itemize}
    \item Suppose the pair $\alpha_0^p, \beta_0^{p+1}$ is matched by adding color $i$ to $A_j$ and $\alpha_1^p$ comes from $\beta_0^{p+1}$ by removing color $i'$ from $A_{j'}$ for $j'>j$.
         \begin{itemize}
        \item either  $\alpha_1^{p}$ is matched by removing color $i$ from $A_j$ and then the path terminates here, or
        \item or  $\alpha_1^{p}$ is matched before Step $j'.i'$.
    \end{itemize}
    \end{enumerate}

So the last lemma is proved. This completes the proof of Theorem \ref{CTCRUC}.

\end{proof}

\section{"Monochromatic" Tverberg theorem for collectively $(r,s)$-unavoidable complexes}

We can slightly extend Theorem \ref{TCUC} by demanding faces to be $(r,s)$-unavoidable.

\begin{theorem}
\label{TTRSU}
 Let $\mathcal{K}=\langle K_1, \dots, K_r \rangle$ be a collectively $(r,s)$-unavoidable family of subcomplexes of $\Delta^{N}$, where $r=p^{\nu}$ is a power of a prime number. Suppose $N=(r-1)(d+2)-s+1$ and $m=N+1$. Assume that $K_i$ is $(m,k)$-balanced  for each $i=1, \dots, r$. Then for each continuous map $f \colon \Delta^{N} \rightarrow \mathbb{R}^d$ there are $r$ pairwise disjoint faces $\Delta_1, \dots, \Delta_r$ such that $f(\Delta_1)\cap \dots \cap f(\Delta_r) \neq \emptyset$ and $\Delta_i \in K_i$ for each $i=1, \dots, r$. 
\end{theorem}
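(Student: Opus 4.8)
The plan is to mimic the proof structure of Theorem \ref{CTCRUC}: assume the conclusion fails, pass to the $r$-fold join of $f$ restricted to the symmetrized deleted join, and derive a contradiction via Volovikov's theorem. Concretely, suppose that for every admissible partition $(A_1,\dots,A_r,B)\in SymmDelJoin(\mathcal K)$ we have $f(A_1)\cap\dots\cap f(A_r)=\emptyset$. The join map $\Phi_f=f^{*r}\colon SymmDelJoin(\mathcal K)\to(\mathbb R^d)^{*r}\cong\mathbb R^{(d+1)r-1}$ is then an $S_r$-equivariant map avoiding the diagonal $D\cong\mathbb R^d$, and $(\mathbb R^d)^{*r}\setminus D$ is $S_r$-homotopy equivalent to $\mathbb S^{(d+1)(r-1)-1}$. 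Volovikov's theorem will give the contradiction provided $SymmDelJoin(\mathcal K)$ is $\big((d+1)(r-1)-1\big)$-connected. Since $N=(r-1)(d+2)-s+1$ and $m=N+1$, and $K_i$ is $(m,k)$-balanced, one has (as in \cite{TverType}) the numeric relation $rk+s-1=(r-1)(d+1)$, so it suffices to show $SymmDelJoin(\mathcal K)$ is $(rk+s-2)$-connected.

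The combinatorial heart is therefore an analogue of Theorem \ref{HCon} in the ``monochromatic'' (uncolored) setting: if $\mathcal K=\langle K_1,\dots,K_r\rangle$ is a collectively $(r,s)$-unavoidable family of $(m,k)$-balanced subcomplexes of $\Delta^N$, then $SymmDelJoin(\mathcal K)$ is $(rk+s-2)$-connected. I would prove this by building a discrete Morse function exactly as in the proof of Theorem \ref{HCon}, but dropping every occurrence of the word ``rainbow'' and all color-bookkeeping: instead of processing $k+1$ colors inside each block $A_j$, one simply processes the vertices $[m]$ in their natural order. For each block index $j=1,\dots,r$ in turn, set $a_j=\min\big((A_j\cup B)\setminus[1,a_{j-1}]\big)$ (with $a_0=0$) and match $(A_1,\dots,A_j,\dots,A_r,B\cup a_j)$ with $(A_1,\dots,A_j\cup a_j,\dots,A_r,B)$ whenever both are admissible and previously unmatched. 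The same four-case analysis of a short gradient path $\alpha_0^p,\beta_0^{p+1},\alpha_1^p,\beta_1^{p+1}$ shows the sequence of indices $(a_1,\dots,a_r)$ strictly decreases lexicographically along any gradient path, so there are no closed paths and we have a genuine discrete Morse function.

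It then remains to bound the dimensions of the critical (unmatched) cells. There is a unique critical $0$-cell, namely the vertex $\{a_1\}$ together with $B=[m]\setminus\{a_1\}$, which stays unmatched throughout. For any other unmatched simplex $(A_1,\dots,A_r,B)$ one argues, block by block, that $|A_i|\ge k$: if $A_i$ were smaller, the defining minimum $a_i$ would be available and moving it in or out would produce a match, unless doing so destroys admissibility — but $K_i$ being $(m,k)$-balanced means every $(k-1)$-face lies in $K_i$, so the obstruction to admissibility can only occur at dimension $k$ (i.e.\ when $|A_i\cup a_i|=k+1\not\subseteq\Delta^{(k)}$), giving $|A_i|=k$ and ``step $i$''-maximality. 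Now form $A_i'=A_i$ if $|A_i|=k+1$ and $A_i'=A_i\cup\{a_i\}$ otherwise (using that $a_i$ is well-defined whenever $|A_i|=k$); applying $(r,s)$-unavoidability to the disjoint collection $(A_1',\dots,A_r')$ yields $s$ indices $i_1,\dots,i_s$ with $A_{i_j}'\in K_{i_j}$. For each such index, if $|A_{i_j}|=k$ then $A_{i_j}\cup\{a_{i_j}\}=A_{i_j}'\in K_{i_j}$ would contradict ``step $i_j$''-maximality, so $|A_{i_j}|=k+1$; hence at least $s$ of the blocks have $k+1$ vertices and the remaining $r-s$ have at least $k$, so $\sum|A_i|\ge rk+s$ and $\dim(A_1,\dots,A_r,B)\ge rk+s-1$. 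Theorem \ref{MTDTM} then gives $(rk+s-2)$-connectedness, completing the proof.

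The main obstacle I anticipate is purely verificational rather than conceptual: one must be careful that the ``ill-defined $a_i$'' cases (the analogue of ``Type 3'' in Step $r$ of Theorem \ref{HCon}) still force $|A_r|=k+1$, and that the lexicographic-decrease argument goes through without the color structure — in the colored proof the block-internal color order was doing real work, and here it is replaced by the single global order on $[m]$, which should in fact simplify matters. The only genuinely new point compared to Theorem \ref{TCUC} is threading the parameter $s$ through the dimension count, which is exactly what the $(r,s)$-unavoidability hypothesis is designed to supply.
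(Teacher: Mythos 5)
Your overall scheme (test map, $r$-fold join avoiding the diagonal, Volovikov) is the same as the paper's, but there is a genuine gap in the quantitative step. Volovikov requires $SymmDelJoin(\mathcal K)$ to be $\bigl((r-1)(d+1)-1\bigr)$-connected, and under $N=(r-1)(d+2)-s+1$, $m=N+1$ this number equals $m-r+s-2$, \emph{not} $rk+s-2$. Your claimed identity $rk+s-1=(r-1)(d+1)$ is not a consequence of the hypotheses: Theorem \ref{TTRSU} imposes no relation between $m$ (equivalently $d$) and $k$ — it would hold only if $m=r(k+1)$, which is not assumed; collective $(r,s)$-unavoidability of $(m,k)$-balanced complexes only forces $m\le (r-s+1)(k+2)-1$, and the paper explicitly remarks that $k$ is not fixed and may grow with $s$. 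The relation you quote is the one from the \emph{colored} setting of Theorem \ref{CTCRUC}, where $m=(2r-1)(k+1)$ and $r(k-1)+s=(r-1)d$ make $rk+s-2$ the right target. Consequently your critical-cell count (each $|A_i|\ge k$, at least $s$ blocks of size $k+1$, hence dimension $\ge rk+s-1$), which is lifted from the colored proof, is too weak whenever $m>r(k+1)$: then $rk+s-2<m-r+s-2=(r-1)(d+1)-1$ and Volovikov cannot be applied.

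What the paper actually does is much shorter: it invokes the statement that for a collectively $(r,s)$-unavoidable balanced family $SymmDelJoin(\mathcal K)$ is $(m-r+s-2)$-connected — the $(r,s)$-generalization of the $(m-r-1)$-connectivity theorem of \cite{TverType} — checks that $m-r+s-2\ge (r-1)(d+1)-1$ is equivalent to $N\ge (r-1)(d+2)-s+1$, and notes that the endpoints $s=1$ and $s=r$ recover Theorem \ref{TCUC} and the topological Tverberg theorem. If you want to prove that connectivity by a Morse argument in the uncolored setting, the count you must establish for critical cells is a bound on the leftover set $B$ (at most $r-s$ vertices of $[m]$ outside $A_1\cup\dots\cup A_r$, i.e.\ dimension $\ge m-r+s-1$), not merely the lower bound $|A_i|\ge k$ on each block; your single-pivot-per-block matching as described does not force vertices of $B$ beyond the $r$ pivots to be absorbed, so this part needs a genuinely different (TverType-style) analysis rather than a ``drop the colors'' translation of Theorem \ref{HCon}.
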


Since $\mathcal{K}$ is $(r,s)$-unavoidable, we obtain that the configuration space $SymmDelJoin(\mathcal{K})$ is $(m-r+s-2)$-connected. Then $m-r+s-2 \geq (r-1)(d+1)-1$ is equivalent to $N \geq (r-1)(d+2)-s+1$. Assume that $s=1$, then we are in the conditions of Theorem \ref{TCUC}. And if $s=r$, then from $(r,s)$-unavoidability it follows that $K_i$ is exactly $\Delta_{[m]}$ and $N=(r-1)(d+2)-r+1=(r-1)(d+1)$, so in this case we are in the conditions of the Topological Tverberg theorem. Note that the parameter $k$ is not fixed and may grow with the growth of $s$.

\section{Collective $(r,s)$-unavoidability in graph terms}

Let $\mathcal{K}=\langle K_{1}, \dots, K_{r} \rangle$ be a family of simplicial complexes. Suppose $K_i$ is $(m,k)$-balanced (or rainbow balanced) for every $i=1, \dots, r$. By $\mathcal{A}^{i}$ denote a relative complement $\Delta^{(k)}_{[m]} \setminus K_i$ (or $Col\left(\Delta^{(k)}_{[m]}\right) \setminus K_i$). $\mathcal{A}^i$ is a family of sets $\{A^i_1, \dots, A^i_{l_i}\}$. Since $K_i$ is  $(m,k)$-balanced (or rainbow balanced) we see that $|A^i_j|=k+1$ for each $j=1, \dots, l_i$. Then for $\mathcal{K}$ we assign a graph $\Gamma=\Gamma(\mathcal{K})=(V(\mathcal{K}), E(\mathcal{K}))$. The vertex set $V(\mathcal{K})$ is a set of pairs $(i,j)$, where $i=1, \dots, r$ and for each $i$ we have $j=1, \dots, l_i$. In other words, for every set $A^i_j$ we assign a vertex. Two vertices $(i,j)$ and $(i',j')$ are connected with an edge whenever $i \neq i'$ and $A^i_j \cap A^{i'}_{j'}=\emptyset$. So $\Gamma(\mathcal{K})$ is $r$-partite Kneser graph $KG(\mathcal{A})$.

\begin{proposition}
\label{PROPCLIQ}
If $\mathcal{K}$ is $(r,s)$-unavoidable, then there is no $(r-s+1)$-vertex clique in $\Gamma(\mathcal{K})$. 
\end{proposition}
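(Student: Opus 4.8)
The plan is to argue by contrapositive: assume $\Gamma(\mathcal{K})$ contains a clique on $r-s+1$ vertices and construct an ordered rainbow partition witnessing the failure of $(r,s)$-unavoidability. First I would unpack what such a clique gives us. A clique $Q$ of size $r-s+1$ in the $r$-partite Kneser graph $\Gamma(\mathcal{K})$ consists of vertices $(i_1,j_1),\dots,(i_{r-s+1},j_{r-s+1})$ with all the indices $i_1,\dots,i_{r-s+1}$ distinct (by $r$-partiteness), and with the associated $(k+1)$-element sets $A^{i_1}_{j_1},\dots,A^{i_{r-s+1}}_{j_{r-s+1}}$ pairwise disjoint (by the Kneser adjacency condition). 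Relabel so that the clique lives over colors/parts $1,\dots,r-s+1$, and write $A_t := A^{t}_{j_t}$ for $t=1,\dots,r-s+1$. By construction each $A_t \in \Delta^{(k)}_{[m]}\setminus K_t$ (or the rainbow analogue), so each $A_t$ is rainbow of size $k+1$ and $A_t \notin K_t$.

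Next I would complete these $r-s+1$ disjoint sets to an ordered rainbow partition $(A_1,\dots,A_r,B)$ of $[m]$. The sets $A_{r-s+2},\dots,A_r$ need to be chosen: set them all equal to $\emptyset$ (the empty set is vacuously rainbow and lies in $K_i$, so these contribute nothing), and let $B = [m]\setminus\bigcup_{t=1}^{r-s+1}A_t$. This $(A_1,\dots,A_r,B)$ is a legitimate ordered rainbow partition. Now apply $(r,s)$-rainbow unavoidability: there must be a set of $s$ indices $\{p_1,\dots,p_s\}$ with $A_{p_q}\in K_{p_q}$ for all $q$. But the indices $t$ for which $A_t\notin K_t$ are exactly $1,\dots,r-s+1$ (for $t>r-s+1$ we have $A_t=\emptyset\in K_t$), so the "good" indices form a set of size at most $r-(r-s+1)=s-1<s$. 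This contradicts the existence of $s$ good indices, so no such clique exists.

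I should be careful about one degenerate point: in the monochromatic (non-rainbow) version the empty set is also in every $K_i$, and the $\emptyset$-padding works identically; for the rainbow-balanced version the definition of rainbow partition only requires each $A_i$ rainbow, which $\emptyset$ satisfies, so the padding is legitimate there too. The other thing to verify is that the $A_t$ are genuinely pairwise disjoint and genuinely not in their respective $K_t$ — this is immediate from the definitions of $\mathcal{A}^i$, $V(\mathcal{K})$, and $E(\mathcal{K})$, and requires no computation.

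The main obstacle, such as it is, is purely bookkeeping: making sure the clique's parts are distinct so the relabelling is valid (this is the $r$-partiteness, built into $\Gamma(\mathcal{K})$ since edges only join vertices with $i\neq i'$), and making sure the padding-by-$\emptyset$ respects whichever notion of "partition" and "admissible" is in force. There is no topology or Morse theory here — the proposition is a direct translation of the combinatorial definition of $(r,s)$-(rainbow-)unavoidability into the language of the Kneser graph $KG(\mathcal{A})$, and the contrapositive formulation makes the one-line counting argument transparent.
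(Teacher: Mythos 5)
Your proposal is correct and follows essentially the same argument as the paper: the paper also assumes a clique exists, extracts the $r-s+1$ pairwise disjoint sets $A^{i_l}_{j_l}\notin K_{i_l}$, and derives a contradiction with $(r,s)$-unavoidability by the same counting; your version merely spells out the padding by empty sets and the placement of each bad set at its own index, which the paper leaves implicit.
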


\begin{proof}
Assume the converse. Then there is a collection of $r-s+1$ pairwise disjoint sets $\{A^{i_1}_{j_1}, \dots, A^{i_{r-s+1}}_{j_{r-s+1}}\}$. Since $A^{i_l}_{j_l} \notin K_{i_l}$ for each $l=1, \dots, r-s+1$ and $\mathcal{K}$ is $(r,s)$-unavoidable, we have a contradiction.

\end{proof}

\begin{remark}
The converse is not true in general. Suppose $m>(k+2)(r-s+1)$ and $K_i=\Delta^{(k)}_{[m]}$ for $i=1, \dots, r$.  Then $\mathcal{K}=\langle K_{1}, \dots, K_{r} \rangle$ is $(m,k)$-balanced. Note that in $\Gamma(\mathcal{K})$ there are no vertices at all, and hence no $(r-s+1)$-vertex clique. But we have a partition $(A_1, \dots, A_r, B)$, where $|A_i|=k+2$ for $i=1, \dots, r-s+1$ and $A_i=\emptyset$ for $i=r-s+2, \dots, r$. Since $A_i \notin K_i$ for $i=1, \dots, r-s+1$, we see that $\mathcal{K}$ is not $(r,s)$-unavoidable.
\end{remark}

\begin{remark}
    The converse is true in the settings of Theorem \ref{CTCRUC}. Indeed, suppose $K_i$ is $(m,k)$-rainbow balanced for $i=1, \dots, r$ and $\Gamma(\mathcal{K})$ hase no $(r-s+1)$-vertex clique. Let $(A_1, \dots, A_r, B)$ be a rainbow partition.  Since all sets are rainbow we see that $|A_i|\leq k+1$ for $i=1, \dots, r$. Therefore if $A_i \notin K_i$, then $A_i \in \mathcal{A}^{i}$. Hence if there is no $(r-s+1)$-vertex clique, then $\mathcal{K}$ is $(r,s)$-rainbow unavoidable.
\end{remark}

\begin{remark}
    In Theorem \ref{TTRSU} we can replace the condition ``$\mathcal{K}$ is $(r,s)$-unavoidable'' with the condition ``the graph $\Gamma(\mathcal{K})$ does not have a clique on $r-s+1$ vertices''. The proof remains the same.
    
\end{remark}

\section{Comparison with previous results}

Theorem \ref{BCT} follows from Theorem \ref{CTCRUC} as a special case. Indeed, suppose $K_i=Col\Delta_{[m]}^{(k)}$ for $i=1, \dots, s$ and $K_{i}=Col\Delta_{[m]}^{(k-1)}$ for $i=s+1, \dots, r$. Since $\Gamma(\mathcal{K})$ does not have a clique on $r-s+1$ vertices, we see that this family of simplices is $(r,s)$-rainbow unavoidable. 

\begin{acknowledgment} I wish to thank Gaiane Panina for suggested problem and advice.
\end{acknowledgment}


\end{document}